\documentclass[oneside,english,british]{amsart}
\usepackage[T1]{fontenc}
\usepackage[utf8]{inputenc}
\usepackage{dsfont}
\usepackage{amstext}
\usepackage{amsthm}
\usepackage{amssymb}
\usepackage{hyperref}

\makeatletter
\numberwithin{equation}{section}

\usepackage{alex}
\usepackage{dsfont} % to get \mathds{1} - "mathbb style 1"
\DeclareMathOperator{\St}{St}

\makeatletter
\newcommand*{\transpose}{%
  {\mathpalette\@transpose{}}%
}
\newcommand*{\@transpose}[2]{%
  \raisebox{\depth}{$\m@th#1\intercal$}%
}
\makeatother

\makeatother

\theoremstyle{remark}
\newtheorem*{notation*}{\protect\notationname}
\theoremstyle{plain}
\newtheorem{thm}{\protect\theoremname}[section]
\newtheorem{lem}[thm]{\protect\lemmaname}
\theoremstyle{remark}
\newtheorem{rem}[thm]{\protect\remarkname}
\theoremstyle{plain}
\newtheorem{prop}[thm]{\protect\propositionname}
\theoremstyle{definition}
\newtheorem{defn}[thm]{\protect\definitionname}
\newtheorem{example}[thm]{\protect\examplename}
\theoremstyle{remark}
\newtheorem*{rem*}{\protect\remarkname}
\theoremstyle{plain}
\newtheorem{cor}[thm]{\protect\corollaryname}
\usepackage{babel}
\addto\captionsbritish{\renewcommand{\corollaryname}{Corollary}}
\addto\captionsbritish{\renewcommand{\definitionname}{Definition}}
\addto\captionsbritish{\renewcommand{\examplename}{Example}}
\addto\captionsbritish{\renewcommand{\lemmaname}{Lemma}}
\addto\captionsbritish{\renewcommand{\notationname}{Notation}}
\addto\captionsbritish{\renewcommand{\propositionname}{Proposition}}
\addto\captionsbritish{\renewcommand{\remarkname}{Remark}}
\addto\captionsbritish{\renewcommand{\theoremname}{Theorem}}
\addto\captionsenglish{\renewcommand{\corollaryname}{Corollary}}
\addto\captionsenglish{\renewcommand{\definitionname}{Definition}}
\addto\captionsenglish{\renewcommand{\examplename}{Example}}
\addto\captionsenglish{\renewcommand{\lemmaname}{Lemma}}
\addto\captionsenglish{\renewcommand{\notationname}{Notation}}
\addto\captionsenglish{\renewcommand{\propositionname}{Proposition}}
\addto\captionsenglish{\renewcommand{\remarkname}{Remark}}
\addto\captionsenglish{\renewcommand{\theoremname}{Theorem}}
\providecommand{\corollaryname}{Corollary}
\providecommand{\definitionname}{Definition}
\providecommand{\examplename}{Example}
\providecommand{\lemmaname}{Lemma}
\providecommand{\notationname}{Notation}
\providecommand{\propositionname}{Proposition}
\providecommand{\remarkname}{Remark}
\providecommand{\theoremname}{Theorem}

\begin{document}
\title{A tensor square theorem for characters of $\GL_{n}(q)$\\
}
\author{\selectlanguage{english}%
Nariel Monteiro and Alexander Stasinski}
\selectlanguage{british}%
\begin{abstract}
We construct an irreducible character of $\GL_{n}(q)$ whose square
contains every irreducible character that is trivial on the centre
of $\GL_{n}(q)$. This character is related to, but in general not
equal to, the Steinberg character. A key ingredient, which we also
prove, is that every nonlinear irreducible character of $\GL_{n}(q)$
that is trivial on the centre contains the trivial character when
restricted to the diagonal subgroup $T$.
\end{abstract}

\address{\selectlanguage{english}%
Department of Mathematics, University of California, Santa Cruz, CA
95064, USA}
\address{\selectlanguage{english}%
Department of Mathematical Sciences, Durham University, Durham, DH1
3LE, UK}
\email{\selectlanguage{english}%
namontei@ucsc.edu\\
alexander.stasinski@durham.ac.uk}
\maketitle

\section{Introduction}

Given a finite group $G$, the problem of the existence of an irreducible
character of $G$ whose square contains every irreducible character
of $G$ was first studied by Heide, Saxl, Tiep and Zalesski \cite{Heide-Saxl-Tiep-Zalesski},
who showed that if $G$ is a finite simple group of Lie type, other
than $\mathrm{PSU}_{n}(q)$ with $n\geq3$ coprime to $2(q+1)$, then
the square of the Steinberg character contains every irreducible character
of $G$. The problem was initially partly motivated by the analogy
with Thompson's conjecture that every finite simple group possesses
a conjugacy class whose square covers the group. Moreover, it has
been shown by direct computation that the sporadic simple groups each
have an irreducible character whose square contains every irreducible
character. As far as simple groups are concerned, this leaves only
the alternating groups, for which the question is open and is related
to the Saxl conjecture, which has recently received a great deal of
attention (see, e.g., \cite{Pak_et_al-Saxl_conj}).

In the present paper, we take the first step towards extending the
result in \cite{Heide-Saxl-Tiep-Zalesski} to not necessarily simple,
finite groups of Lie type, where the first obvious candidate is $\GL_{n}(q)$.
In this case, it is easily seen that the square of the Steinberg character,
or any other irreducible character, does not contain every irreducible
character in general. Indeed, the square of an irreducible character
can only contain irreducible constituents with a fixed central character,
so whenever the centre of the group is nontrivial, no such square
can contain all the irreducibles.  Since the Steinberg character
has trivial central character, one may therefore in general ask whether
there exists an irreducible character of $G=\GL_{n}(q)$ whose square
contains every irreducible character of $G$ that is trivial on the
centre $Z$ of $G$. In the present paper, we show that this is true,
although the square of the Steinberg character is not sufficient in
general. More precisely, we define the irreducible Harish--Chandra/parabolically
induced character
\[
\sigma=R^{G}_{L}(\St_{m},\epsilon\St_{m},\dots,\epsilon^{d-1}\St_{m})=\Ind^{G}_{P}\Inf^{P}_{L}(\St_{m},\epsilon\St_{m},\dots,\epsilon^{d-1}\St_{m}),
\]
where $d=\gcd(n,q-1)$, $m=n/d$, $L=\GL_{m}(q)^{d}$ is the block
diagonal subgroup with $d$ blocks of size $m$, $P$ the corresponding
block-upper subgroup, $\St_{m}$ denotes the Steinberg character of
$\GL_{m}(q)$ and $\epsilon\in\Irr(\F^{\times}_{q})$ is a character
of order $d$, the powers of which are used to twist $\St_{m}$ on
each block of $L$. Here $\Inf^{P}_{L}$ denotes inflation from $L$
to $P$. One of our main results
is that $\sigma^{2}$ contains every irreducible character of $G$
that is trivial on $Z$ (see Theorem~\ref{thm: Main thm 2}). When
$d=1$, $\PGL_{n}(q)\cong\text{PSL}_{n}(q)$, and $\sigma=\St_{n}$,
so in this case our result coincides with \cite[Theorem~1.2]{Heide-Saxl-Tiep-Zalesski}.

We note that even the generalised `tensor square' question, where
one considers only characters trivial on the centre, may fail to have
a positive answer for general finite groups of Lie type. Namely, we
show (see Example~\ref{exa:PGL2}) that $\PGL_{2}(q)$ with $q$
odd and $q\not\equiv1\pmod 4$, does not have any irreducible character
whose square contains every irreducible character. Nevertheless, it
follows from our result that $\GL_{2}(q)$ has an irreducible character
whose square contains every irreducible character of $\PGL_{2}(q)$
(identifying characters of $\PGL_{2}(q)$ with characters of $\GL_{2}(q)$
that are trivial on the centre). More generally, one may ask when
a non-simple adjoint finite group of Lie type has a central extension
playing the role of $\GL_{n}(q)$ and a character playing the role
of $\sigma$.

A key ingredient of the proof of Theorem~\ref{thm: Main thm 2} is
our other main result, Theorem~\ref{thm: Restriction to T}, that
every nonlinear irreducible character of $G$ that is trivial on $Z$
contains the trivial character when restricted to the diagonal subgroup
$T$. In particular, it follows directly from this and a result of
Deligne and Lusztig that the Steinberg square contains all the nonlinear
characters of $G$ that are trivial on $Z$. However, it is easy to
see that already in the case of $\GL_{2}(q)$ with $q$ odd, the Steinberg
square misses a linear character that is trivial on the centre, hence
the need for another character, such as our $\sigma$ above. 

We believe
that Theorem~\ref{thm: Restriction to T} may have nontrivial generalisations
in several directions. For example, in \cite{Zalesski2016}, Zalesski
showed an analogous result on restriction to the `Singer torus' (i.e.,
the nonsplit/elliptic torus) of $G$, by a very different method of
proof (and motivation different from ours). In Zalesski's result,
one has to exclude not only linear characters but also those of the
next higher degree. One may therefore speculate that something similar
holds on restriction to any maximal torus in $G$, and possibly also
for other reductive groups over $\F_{q}$ (or over finite rings).

While we focus on characters with trivial central character, one may
ask whether, for any square character $\zeta\in\Irr(Z)$, there exists
a $\rho\in\Irr(G)$ whose square contains all the irreducible characters
of $G$ with central character $\zeta$. In the end of the paper,
we show that Theorem~\ref{thm: Main thm 2} immediately implies a
positive answer when a square root of $\zeta$ extends to $G$, but
that the answer can otherwise be negative.

The study of the irreducible constituents of tensor products of representations
of finite groups of Lie type and related groups has recently attracted
considerable attention. We mention here only a small selection of
the most recent work. Gupta--Hassain \cite{Gupta2025} explicitly
decompose the tensor product of any two irreducible representations
of $\GL_{2}(q)$, following previous results of \cite{Aburto-Hageman-PantojaGL2}.
Letellier and Nam \cite{Letellier2025} prove an analogue of the Saxl
conjecture for unipotent representations of $\GL_{n}(q)$. Nam \cite{Nam2026}
gives formulas for the multiplicities of tensor products of unipotent
almost characters and Deligne--Lusztig characters of a split untwisted
reductive group over a finite field; in particular, giving criteria
for when the Steinberg square contains or does not contain, certain
Deligne--Lusztig characters.
\subsection*{Notation}
We will denote the trivial character of a given finite group by $\mathds{1}$,
but occasionally specify the group in the notation for clarity, writing
for example $\mathds{1}_{T}$. For two characters $\rho$ and $\rho'$
of a finite group, we will also write $\rho\subseteq\rho'$ if $\rho$
is the character of a subrepresentation of a representation with character
$\rho'$ (note that this is stronger than just requiring that every
irreducible constituent of $\rho$ is contained in $\rho'$). 

If $\chi\in\Irr(\F^{\times}_{q})$ and $\rho\in\Irr(\GL_{n}(q))$,
we will write $\chi\rho$ for the twisted character $(\chi\circ\det)\rho$
of $\GL_{n}(q)$.

If $L=\GL_{m_{1}}(q)\times\dots\times\GL_{m_{k}}(q)$ is a block diagonal
subgroup of $\GL_{n}(q)$ and $\rho_{i}\in\Irr(\GL_{m_{i}}(q))$,
we will usually write the character $\rho_{1}\boxtimes\dots\boxtimes\rho_{k}$
of $L$ as $(\rho_{1},\dots,\rho_{k})$.

\subsection*{Acknowledgements}
The first-named author was supported by the National Science Foundation MPS-Ascend Postdoctoral Research Fellowship under Grant No. 2213166, and gratefully acknowledges additional financial support from the Department of Mathematics at the University of California, Santa Cruz.

\section{Restriction to the diagonal subgroup}

In this section we prove one of the key results needed for the other
main theorem of this paper. To state it, let $G=\GL_{n}(q)$, let
$T$ be the diagonal subgroup of $G$ and let $Z$ be the centre of
$G$. An obvious necessary condition for a $\rho\in\Irr(G)$ to contain
$\mathds{1}_{T}$ when restricted to $T$ is that $\rho$ contains
$\mathds{1}_{Z}$ when restricted to $Z$. Another straightforward
necessary condition is that $\rho$ is not linear, unless it is trivial
or $G=\GL_{2}(2)$. We prove that in general these conditions are
also sufficient.

For any $n\geq2$, let $P$ be $(n-1,1)$-block-upper-triangular subgroup
of $G$ and $V$ its block-upper unipotent subgroup, that is,
\[
P=\begin{pmatrix}\GL_{n-1}(q) & *\\
\bar{0}^{\transpose} & *
\end{pmatrix},\qquad V=\begin{pmatrix}I & *\\
\bar{0}^{\transpose} & 1
\end{pmatrix},
\]
where $I$ is the identity matrix of size $n-1$. Note that $V$ is
abelian and normal in $P$. We also have the semidirect product $P=LV$,
where $L$ is the $(n-1,1)$-block-diagonal subgroup. 
\begin{lem}
\label{lem: chars of V and action on them}Let $n\geq2$, with notation
as above.
\end{lem}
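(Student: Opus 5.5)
The statement as extracted breaks off after its opening sentence. I will therefore prove what the lemma's label and the preceding set-up indicate it asserts, namely a description of $\Irr(V)$ and of the conjugation action of $L$ (equivalently of $P$) on it. Concretely, I will establish three things. First, fixing a nontrivial $\psi\in\Irr(\F_q)$, the map $a\mapsto\psi_a$, with $\psi_a(v)=\psi(av)$, is an isomorphism from row vectors $a\in\F_q^{n-1}$ onto $\Irr(V)$; here we identify $V$ with column vectors $v\in\F_q^{n-1}$. Second, $L$ has exactly two orbits on $\Irr(V)$: $\{\mathds{1}_V\}$ and the nontrivial characters. Third, the stabiliser of $\psi_{e}$, with $e=(0,\dots,0,1)$, is the subgroup of $(A,c)\in L$ such that the last row of $A$ is $(0,\dots,0,c)$.

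For the first point, note that $V\cong\F_q^{n-1}$ as an abelian group via $\left(\begin{smallmatrix}I & v\\ 0 & 1\end{smallmatrix}\right)\mapsto v$. The pairing $(a,v)\mapsto\psi(av)$ is nondegenerate because $\psi$ is nontrivial. Hence $a\mapsto\psi_a$ is an injective homomorphism $\F_q^{n-1}\to\Irr(V)$, and since both groups have order $q^{n-1}$ it is a bijection.

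For the action, I will compute the conjugate of $u(v)=\left(\begin{smallmatrix}I & v\\ 0 & 1\end{smallmatrix}\right)$ by $g=\mathrm{diag}(A,c)$. This gives $g\,u(v)\,g^{-1}=u(Avc^{-1})$. The induced action on characters, $({}^{g}\psi_a)(x)=\psi_a(g^{-1}xg)$, is then
\[
{}^{g}\psi_a=\psi_{c\,aA^{-1}}.
\]
Since $\GL_{n-1}(q)$ acts transitively on nonzero row vectors by $a\mapsto aA^{-1}$, already with $c=1$, the orbit statement follows, and the trivial character is obviously fixed. The stabiliser of $\psi_e$ is given by the condition $c\,eA^{-1}=e$, i.e.\ $eA=ce$, which says exactly that the last row of $A$ is $(0,\dots,0,c)$. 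So the stabiliser is isomorphic to $\{A\in\GL_{n-1}(q)\}$ with last row of this shape (a mirabolic-type group extended by scalars in the last entry), with $c$ determined by $A$. Its stabiliser in $P$ is this group times $V$.

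The only real obstacle is bookkeeping: keeping the left/right conventions for the action consistent, so that the formula $a\mapsto caA^{-1}$ (rather than its inverse) is right. I will fix the convention at the start and verify it on a scalar example. Any further parts of the lemma, such as the orbit size $q^{n-1}-1$ or $\Ind_{V}^{P}$-type consequences, follow directly from these formulas via Clifford theory.
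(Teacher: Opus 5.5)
\textbf{There is a gap: you do not prove the lemma's $T$-stabiliser claim, and your chosen representative fails it.}

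Your description of $\Irr(V)$ and of the conjugation action is correct and is the paper's argument. Your formula ${}^{g}\psi_a=\psi_{caA^{-1}}$ is the row-vector version of the paper's $\psi_{c(M^{-1})^{\transpose}\bfa}$. In both, the two-orbit statement comes from $V$ acting trivially on $\Irr(V)$ together with transitivity of $\GL_{n-1}(q)$ on nonzero vectors.

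The problem is your third claim. Part (2) of the lemma also asserts that the nontrivial orbit contains a character $\psi_*$ with $\mathrm{Stab}_T(\psi_*)=Z$, where $T$ is the diagonal torus and $Z$ the centre. This is exactly the hypothesis $\mathrm{Stab}_T(\phi)=Z$ used in Lemma~\ref{lem: constits of G-G type reps satisfy what we want}, and hence in Theorem~\ref{thm: Restriction to T}.

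Your representative $\psi_e$, with $e=(0,\dots,0,1)$, does not have this property. For $g=\mathrm{diag}(m_1,\dots,m_{n-1},c)\in T$, your condition $eA=ce$ only forces $m_{n-1}=c$. So $\mathrm{Stab}_T(\psi_e)=\{\mathrm{diag}(m_1,\dots,m_{n-2},c,c)\}$, which is strictly larger than $Z$ once $n\geq3$ and $q>2$. Your computation of the full $L$-stabiliser of $\psi_e$ (a mirabolic-type group) is correct, but it answers a different question.

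\textbf{The fix.} It follows in one line from your own formula, and it is what the paper does: take $a=(1,\dots,1)$, or any vector with all coordinates nonzero.
\begin{itemize}
\item For diagonal $A=\mathrm{diag}(m_1,\dots,m_{n-1})$, the condition $caA^{-1}=a$ reads $c\,m_i^{-1}a_i=a_i$ for every $i$. This means $m_i=c$ for all $i$, i.e.\ $g\in Z$.
\item Conversely, scalars clearly fix $\psi_a$.
\item $\psi_a$ lies in the nontrivial orbit because $a\neq0$.
\end{itemize}
The underlying point is that $\mathrm{Stab}_T(\psi_a)$ imposes the condition $m_i=c$ only for those $i$ with $a_i\neq0$. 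For $n=2$ your $e$ is this vector, so the discrepancy only appears for $n\geq3$.
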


\begin{enumerate}
\item Let $\psi:\F^{+}_{q}\rightarrow\C^{\times}$ be a nontrivial additive
homomorphism. There exists an isomorphism
\[
\bfa\in\F^{n-1}_{q}\longiso\Irr(V),\qquad\bfa\longmapsto[\psi_{\bfa}:v\longmapsto\psi(\bfa\cdot v)],
\]
where we identify a column vector $v\in\F^{n-1}_{q}$ with its corresponding
element $\begin{pmatrix}I & v\\
\bar{0}^{\transpose} & 1
\end{pmatrix}\in V$ . Moreover, for $g=\begin{pmatrix}M & \bar{0}\\
\bar{0}^{\transpose} & c
\end{pmatrix}\in L$, where $M\in\GL_{n-1}(q)$ and $c\in\F^{\times}_{q}$,
\[
\leftexp{g}{\psi_{\bfa}}=\psi_{c(M^{-1})^{\transpose}\bfa}
\]
\item There are exactly two $P$-orbits in $\Irr(V)$. Moreover, the nontrivial
orbit contains an element $\psi_{*}$ such that $\Stab_{T}(\psi_{*})=Z$.
\end{enumerate}
\begin{proof}
(1): The first assertion follows from the fact that the dot product
defines a nondegenerate bilinear form. For the second assertion, for
any $v\in V$,
\begin{align*}
\leftexp{g}{\psi_{\bfa}}(v) & =\psi_{\bfa}(g^{-1}vg)=\psi_{\bfa}\Big(\begin{pmatrix}M^{-1} & \bar{0}\\
\bar{0}^{\transpose} & c
\end{pmatrix}\begin{pmatrix}I & v\\
\bar{0}^{\transpose} & 1
\end{pmatrix}\begin{pmatrix}M & \bar{0}\\
\bar{0}^{\transpose} & c
\end{pmatrix}\Big)=\psi_{\bfa}\Big(\begin{pmatrix}I & M^{-1}vc\\
\bar{0}^{\transpose} & 1
\end{pmatrix}\Big)\\
 & =\psi(\bfa\cdot M^{-1}vc)=\psi(c\bfa^{\transpose}M^{-1}v)=\psi((c(M^{-1})^{\transpose}\bfa)\cdot v)\\
 & =\psi_{c(M^{-1})^{\transpose}\bfa}(v).
\end{align*}
(2): As $V$ acts trivially on $\Irr(V)$, we only need to consider
the $L$-action, which is given in the previous part. The trivial
character $\mathds{1}_{V}=\psi_{\bar{0}}$ is its own orbit and since
the action of $\GL_{n-1}(q)$ on $\F^{n-1}_{q}\setminus\{\bar{0}\}$
is transitive, there is exactly one nontrivial orbit. 

Finally, let $\bfa=(1,\dots,1)^{\transpose}$ and set $\psi_{*}:=\psi_{\bfa}$.
By the previous part, $g=\begin{pmatrix}M & \bar{0}\\
\bar{0}^{\transpose} & c
\end{pmatrix}\in L$, lies in $\Stab_{L}(\psi_{*})$ if and only if $c(M^{-1})^{\transpose}\bfa=\bfa$.
Assuming that $g\in T$, we have that $M=\diag(m_{1},\dots,m_{n-1})$
is diagonal, hence $c(M^{-1})^{\transpose}\bfa=\bfa$ if and only
if $cm^{-1}_{i}=1$, for all $i=1,\dots,n-1$. Thus $g\in\Stab_{T}(\psi_{*})$
if and only if $g\in Z$.
\end{proof}

\begin{lem}
\label{lem: constits of G-G type reps satisfy what we want}Let $H$
be a subgroup of $G$ that is normalised by $T$ and such that $H\cap T=\{1\}$.
Suppose that there exists a character $\phi\in\Irr(H)$ such that
$\Stab_{T}(\phi)=Z$. Then every nonlinear irreducible constituent
of $\Ind^{G}_{H}\phi$ that is trivial on $Z$ contains $\mathds{1}_{T}$
on restriction to $T$. 
\end{lem}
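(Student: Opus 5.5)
The plan is to pass through the intermediate subgroup $HT$ and combine Frobenius reciprocity, Clifford theory and Mackey's formula. Since $T$ normalises $H$, the set $HT$ is a subgroup of $G$, and $H$ is normal in it. Since $H\cap T=\{1\}$, we have $HT=H\rtimes T$.

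\emph{Step 1: pass to $HT$.} Let $\rho$ be an irreducible constituent of $\Ind^{G}_{H}\phi$ that is trivial on $Z$. By Frobenius reciprocity, $\Res^{G}_{H}\rho$ contains $\phi$. Hence some irreducible constituent $\theta$ of $\Res^{G}_{HT}\rho$ has $\phi$ as a constituent of $\Res^{HT}_{H}\theta$. Because $\Res^{G}_{T}\rho\supseteq\Res^{HT}_{T}\theta$, it suffices to show that $\Res^{HT}_{T}\theta$ contains $\mathds{1}_{T}$. Moreover, $Z$ is central and $\rho$ is trivial on $Z$, so $\theta$ is also trivial on $Z$.

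\emph{Step 2: Clifford theory.} We compute the inertia group of $\phi$ in $HT$. An element $ht$ with $h\in H$, $t\in T$ fixes $\phi$ if and only if $t$ does, because inner conjugation by $h$ fixes $\phi$. Hence $\Stab_{HT}(\phi)=H\Stab_{T}(\phi)=HZ$, and $HZ\cong H\times Z$ because $Z$ is central and $H\cap Z=\{1\}$. The irreducible characters of $HZ$ lying over $\phi$ are therefore exactly $\phi\boxtimes\lambda$ with $\lambda\in\Irr(Z)$. By the Clifford correspondence, $\theta=\Ind^{HT}_{HZ}(\phi\boxtimes\lambda)$ for some $\lambda\in\Irr(Z)$. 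Restricted to the central subgroup $Z$, this character is $\theta(1)\lambda$. Since $\theta$ is trivial on $Z$, we get $\lambda=\mathds{1}_{Z}$.

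\emph{Step 3: Mackey.} We restrict $\theta=\Ind^{HT}_{HZ}(\phi\boxtimes\mathds{1}_{Z})$ to $T$. Since $HT=T\cdot HZ$, there is a single $(T,HZ)$-double coset. Also $T\cap HZ=Z$: if $hz\in T$ with $h\in H$, $z\in Z$, then $h\in T\cap H=\{1\}$. Mackey's formula therefore gives
\[
\Res^{HT}_{T}\theta=\Ind^{T}_{Z}\Res^{HZ}_{Z}(\phi\boxtimes\mathds{1}_{Z})=\phi(1)\Ind^{T}_{Z}\mathds{1}_{Z},
\]
which contains $\mathds{1}_{T}$ by Frobenius reciprocity. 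This completes the argument.

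The argument does not seem to use the nonlinearity hypothesis, which presumably matters only in how the lemma is applied later. The only delicate point is the inertia group computation in Step 2, namely that $\Stab_{HT}(\phi)=HZ$ and that $HZ$ is a direct product. This relies on $H\cap T=\{1\}$ and the centrality of $Z$, and should be checked carefully.
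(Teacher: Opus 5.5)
Your argument is correct and is essentially the paper's proof: both pass to $HT$, use $\Stab_{HT}(\phi)=HZ$ to see that $\rho|_{HT}$ contains the irreducible character $\Ind^{HT}_{HZ}$ of the extension of $\phi$ that is trivial on $Z$, and then apply Mackey's formula with $T\cap HZ=Z$ to get $\rho|_{T}\supseteq\Ind^{T}_{Z}\mathds{1}\supseteq\mathds{1}_{T}$. The only difference is cosmetic: the paper first places $\rho$ inside $\Ind^{G}_{ZH}\tilde{\phi}$ and invokes irreducibility, while you identify the constituent $\theta$ via the Clifford correspondence. You are also right that nonlinearity is never used.
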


\begin{proof}
Let $\rho\in\Irr(G)$ be a constituent of $\Ind^{G}_{H}\phi$ that
is trivial on $Z$. Then $\rho$ is contained in $\Ind^{G}_{ZH}\tilde{\phi}$,
where $\tilde{\phi}$ is the extension of $\phi$ to $ZH$ that is
trivial on $Z$. Thus $0\neq\langle\rho,\Ind^{G}_{TH}\Ind^{TH}_{ZH}\tilde{\phi}\rangle=\langle\rho|_{TH},\Ind^{TH}_{ZH}\tilde{\phi}\rangle$.
Since $\Stab_{T}(\phi)=Z$, we have $\Stab_{TH}(\tilde{\phi})=ZH$,
so $\Ind^{TH}_{ZH}\tilde{\phi}$ is irreducible and is hence contained
in $\rho|_{TH}$. On the other hand, by Mackey's induction-restriction
formula,
\[
\Res^{TH}_{T}\Ind^{TH}_{ZH}\tilde{\phi}\supseteq\Ind^{T}_{T\cap(ZH)}(\tilde{\phi}|_{T\cap(ZH)})=\Ind^{T}_{Z}(\tilde{\phi}|_{Z})\supseteq\Ind^{T}_{Z}\mathds{1}.
\]
Thus $\rho|_{T}\supseteq\Ind^{T}_{Z}\mathds{1}\supseteq\mathds{1}_{T}$.
\end{proof}

\begin{thm}
\label{thm: Restriction to T}Let $\rho\in\Irr(G)$ be a nonlinear
character that is trivial on $Z$. Then $\rho|_{T}$ contains $\mathds{1}_{T}$.
\end{thm}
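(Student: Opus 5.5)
Induct on $n$, using Lemma~\ref{lem: chars of V and action on them} and Lemma~\ref{lem: constits of G-G type reps satisfy what we want} with $H=V$. The subgroup $V$ is normalised by $T$ (since $T\subseteq L$), and $V\cap T=\{1\}$. By Lemma~\ref{lem: chars of V and action on them}(2) there is $\psi_{*}\in\Irr(V)$ with $\Stab_{T}(\psi_{*})=Z$. Let $\rho\in\Irr(G)$ be nonlinear and trivial on $Z$, and restrict $\rho$ to $V$. Since $\Irr(V)$ has exactly two $P$-orbits, either $\rho|_{V}$ contains a nontrivial character or $\rho|_{V}$ is a multiple of $\mathds{1}_{V}$.

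\emph{Case 1.} Suppose $\rho|_{V}$ contains some nontrivial $\psi_{\bfa}$. This character is $P$-conjugate to $\psi_{*}$, and $\rho$ is $G$-invariant, so $\rho|_{V}$ contains $\psi_{*}$. By Frobenius reciprocity $\rho$ is then a constituent of $\Ind^{G}_{V}\psi_{*}$. Lemma~\ref{lem: constits of G-G type reps satisfy what we want} now gives $\mathds{1}_{T}\subseteq\rho|_{T}$ directly.

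\emph{Case 2.} Suppose $V\subseteq\ker\rho$. Then the normal closure of $V$ in $G$ lies in $\ker\rho$. For $n\geq2$ this normal closure contains all transvections, hence contains $\SL_{n}(q)$ (excluding only the degenerate cases $\GL_{2}(2)$ and $\GL_{2}(3)$, where $\SL_{n}(q)$ is not perfect). Therefore $\rho$ factors through $\det$, so $\rho$ is linear, contradicting the hypothesis.

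In the exceptional small cases I would instead argue directly. For $\GL_{2}(2)$ we have $T=Z=1$, so the claim is trivial. For $\GL_{2}(3)$ the normal closure of $V$ is still $\SL_{2}(3)$, because that is the group generated by transvections. I expect this argument actually works uniformly: the normal closure of $V$ is the subgroup generated by all conjugates of $V$, which are transvection groups, and transvections generate $\SL_{n}(q)$ for every $q$ and $n\geq2$. Perfectness is therefore not needed, and no induction is necessary at all.

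The main obstacle is only the bookkeeping in Case~1, which is to check that $G$-invariance of $\rho$ lets us pass from an arbitrary nontrivial $\psi_{\bfa}$ in $\rho|_{V}$ to $\psi_{*}$ itself. Since $\rho$ is a class function on $G$ and $\psi_{*}={}^{g}\psi_{\bfa}$ for some $g\in P$, we get $\langle\rho|_{V},\psi_{*}\rangle=\langle\rho|_{V},\psi_{\bfa}\rangle\neq0$. The remainder is the standard fact that $\SL_{n}(q)$ is generated by transvections.
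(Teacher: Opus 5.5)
Your proof is correct and follows the paper's argument: the same dichotomy on $\rho|_{V}$, with Case~1 handled by $P$-conjugacy to $\psi_{*}$ and Lemma~\ref{lem: constits of G-G type reps satisfy what we want}, and Case~2 by the normal closure of $V$ containing all transvections, which generate $\mathrm{SL}_{n}(q)$ and force $\rho$ to be linear. As you noticed yourself, the opening appeal to induction on $n$ and the special treatment of $\mathrm{GL}_{2}(2)$ and $\mathrm{GL}_{2}(3)$ are unnecessary, because generation by transvections holds for all $n\geq2$ and all $q$.
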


\begin{proof}
Let $H=V$ and $\phi=\psi_{*}$ as in Lemma~\ref{lem: chars of V and action on them}.
Then Lemma~\ref{lem: constits of G-G type reps satisfy what we want}
implies that every nonlinear irreducible constituent of $\Ind^{G}_{V}\psi_{*}$
that is trivial on $Z$ contains $\mathds{1}_{T}$ on restriction
to $T$. If $\rho|_{V}$ contains a nontrivial character $\psi_{\bfa}$
of $V$, then as $\rho$ is $P$-stable, $\rho|_{V}$ contains all
the characters in the $P$-orbit of $\psi_{\bfa}$, in particular
$\psi_{*}$. Therefore, by Lemma~\ref{lem: chars of V and action on them},
any $\rho\in\Irr(G)$ either contains $\psi_{*}$ or $\mathds{1}_{V}$
on restriction to $V$.

Assume now that $\rho|_{V}$ contains only $\mathds{1}_{V}$. Then
$V\subseteq\Ker\rho$, so $\Ker\rho$ is a non-central normal subgroup
of $G$. Note that $V$ contains the elementary transvection $I+aE_{1n}$,
$a\in\F_{q}$ and the permutation matrices in $G$ act transitively,
by conjugation, on the elements $I+E_{ij}$, $i\neq j$. Since $\SL_{n}(q)$
is generated by the elementary transvections $I+aE_{ij}$, the normal
closure of $V$, hence $\Ker\rho$, contains $\SL_{n}(q)$. Thus $\rho$
is a linear character of $G$.
\end{proof}

\begin{rem}\label{rem:Ind}
Inspecting the proof of Lemma~\ref{lem: constits of G-G type reps satisfy what we want}
and Theorem~\ref{thm: Restriction to T}, one can see that what is
actually proved is that the conclusion is that $\rho|_{T}\supseteq\Ind^{T}_{Z}\mathds{1}$.
It is also easy to see that a straightforward modification of the
proofs yields the more general statement that every nonlinear $\rho\in\Irr(G)$
with central character $\zeta$ contains $\Ind^{T}_{Z}\zeta$. We
shall however not need this.
\end{rem}

We will use the following lemma in the proof of Theorem~\ref{thm: Main thm 2}. Together with Theorem~\ref{thm: Restriction to T}, it immediately implies that every nonlinear irreducible character of $G$ that is trivial on $Z$ is contained the square of the Steinberg character.
\begin{lem}
\label{lem: St square contains all non-lin triv on Z}Every $\rho\in\Irr(G)$ that contains
$\mathds{1}_{T}$ on restriction to $T$ is contained in $\St^{2}_n$.
\end{lem}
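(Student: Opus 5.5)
We would prove Lemma~\ref{lem: St square contains all non-lin triv on Z} using the well-known description of the Steinberg character restricted to semisimple elements, due to Deligne and Lusztig. The key identity is
\[
\St_{n}\cdot\St_{n}=\St_{n}\cdot\overline{\St_{n}}\supseteq\dots,
\]
which we will obtain from the formula $\St_{n}\otimes\rho$ for the tensor product of the Steinberg character with an arbitrary class function. Recall that the Steinberg character is real valued (indeed rational), so $\St_{n}^{2}=\St_{n}\overline{\St_{n}}$. Therefore, for $\rho\in\Irr(G)$,
\[
\langle\rho,\St^{2}_{n}\rangle=\langle\rho\St_{n},\St_{n}\rangle .
\]

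The central input is the Deligne--Lusztig result that, for any class function $f$, the product $\St_{n}\cdot f$ can be expressed through induction from a maximal torus. More precisely, we will use the formula
\[
\St_{n}\cdot\Ind^{G}_{B}\mathds{1}=\Ind^{G}_{T}\mathds{1}_{T},
\]
where $B$ is the upper triangular Borel subgroup. A more general version is that $\St_{n}\otimes\mathbb{1}$ restricted appropriately equals $\Ind_{T}^{G}$ up to sign. Equivalently, we will use the known fact (Deligne--Lusztig, or a direct computation via Curtis duality) that
\[
\St_{n}\cdot\St_{n}\supseteq\Ind^{G}_{T}\mathds{1}_{T}
\]
is not what we want; instead we use the cleaner identity $\St_{n}\otimes\St_{n}\supseteq\St_{n}|_{B}$-type arguments.

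We plan to argue as follows. First, by Frobenius reciprocity, $\rho|_{T}\supseteq\mathds{1}_{T}$ means exactly that $\rho\subseteq\Ind^{G}_{T}\mathds{1}_{T}$ as a constituent. Second, we will show that every constituent of $\Ind^{G}_{T}\mathds{1}_{T}$ is a constituent of $\St_{n}^{2}$. For this we use that $\St_{n}|_{B}=\Ind^{B}_{T}\mathds{1}_{T}$. This holds because $\St_{n}|_{U}$ is the regular character of $U$ (Steinberg), and $\St_{n}$ vanishes off semisimple elements with $\St_{n}(t)=\pm|C_{G}(t)|_{p}$, which equals $1$ in absolute value on regular elements. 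More simply, the standard fact $\St_{n}=\Ind^{G}_{B}$-alternating sum gives $\St_{n}|_{B}\cong\Ind^{B}_{T}\mathds{1}$ via the Gelfand--Graev/Curtis computation. Then
\[
\St_{n}\otimes\Ind^{G}_{B}\mathds{1}_{B}=\Ind^{G}_{B}(\St_{n}|_{B})=\Ind^{G}_{B}\Ind^{B}_{T}\mathds{1}=\Ind^{G}_{T}\mathds{1}_{T}.
\]
Since $\Ind^{G}_{B}\mathds{1}_{B}$ contains $\St_{n}$ (as $\St_{n}$ is a constituent of the principal series), we get
\[
\Ind^{G}_{T}\mathds{1}_{T}=\St_{n}\otimes\Ind^{G}_{B}\mathds{1}\supseteq\St_{n}\otimes\St_{n}.
\]
This inclusion is the wrong direction for our purpose, so a refinement is needed.

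To get the right direction, we plan to compute multiplicities directly. We have $\langle\rho,\St^{2}_{n}\rangle=\langle\rho\St_{n},\St_{n}\rangle$. We also use that $\St_{n}$ is supported on semisimple elements and agrees with $\varepsilon_{G}\varepsilon_{C(s)}|C_{G}(s)|_{p}$ there. Expanding $\St_{n}=\sum_{w}(\pm)$ Deligne--Lusztig characters, or more directly using the formula $\St_{n}\cdot\overline{\St_{n}}=\sum_{T'}\frac{\ldots}{}\Ind_{T'}^{G}\mathds{1}$ (the decomposition of $\St\otimes\St$ as a positive combination of $\Ind^{G}_{T_{w}}\mathds{1}$ over all $G$-classes of maximal tori $T_{w}$, which is the Deligne--Lusztig result), we obtain that $\St_{n}^{2}$ is a \emph{nonnegative} rational combination of the $\Ind^{G}_{T_{w}}\mathds{1}$ in which the split torus $T$ occurs with positive coefficient. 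Concretely, the formula is $\St\cdot\St=\frac{1}{|W|}\sum_{w\in W}\Ind^{G}_{T_{w}}\mathds{1}$ up to positivity considerations. Granting it, any $\rho$ with $\langle\rho,\Ind^{G}_{T}\mathds{1}\rangle>0$ has $\langle\rho,\St^{2}_{n}\rangle>0$, since all other terms contribute nonnegatively.

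The main obstacle is pinning down the exact positivity statement: that $\St^{2}$ equals a positive combination of the $\Ind^{G}_{T_{w}}\mathds{1}$. We would try first to derive it from the character formula $\St(g)^{2}=|C_{G}(g)|_{p}^{2}$ on semisimple $g$, where $|C_{G}(g)|_{p}^{2}$ equals the number of $p$-elements... Matching this against the formula for $\Ind^{G}_{T_{w}}\mathds{1}(g)$ summed over tori containing $g$ is the delicate check.
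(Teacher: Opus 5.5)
Your final argument is exactly the paper's proof: $\St_n^2$ is a nonnegative rational combination of the characters $\Ind^G_{T_w}\mathds{1}$, with the split torus $T$ occurring with positive coefficient, so $\langle\rho,\St_n^2\rangle\geq\frac{1}{|W|}\langle\rho|_T,\mathds{1}_T\rangle>0$ by Frobenius reciprocity; the paper simply cites Deligne--Lusztig 7.15.2 for this decomposition. The ``positivity obstacle'' you flag is not real, because $\St_n^2=\frac{1}{|W|}\sum_{w\in W}\Ind^G_{T_w}\mathds{1}$ holds exactly: it follows from $\St_n=\frac{1}{|W|}\sum_{w}\mathrm{sgn}(w)R_{T_w}(\mathds{1})$ and $R_{T_w}(\mathds{1})\St_n=\mathrm{sgn}(w)\Ind^G_{T_w}\mathds{1}$ (with $R_{T_w}$ Deligne--Lusztig induction), so the signs square away, and your detour through $\St_n|_B=\Ind^B_T\mathds{1}$, which only gives the opposite inclusion $\St_n^2\subseteq\Ind^G_T\mathds{1}$, can be dropped.
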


\begin{proof}
By a formula of Deligne and Lusztig \cite[7.15.2]{delignelusztig},
$\St^{2}_n=\sum_{(\mathbf{T})}a_{i}\Ind^{G}_{\mathbf{T}(\F_{q})}\mathds{1}$,
where $a_{i}$ are positive rational numbers and the sum is taken
over a set of representatives of $G$-conjugacy classes of rational
maximal tori. Thus by Frobenius reciprocity, $\langle\rho,\St^{2}_n\rangle>\langle\rho,\Ind^{G}_{T}\mathds{1}_{T}\rangle=\langle\Res^{G}_{T}\rho,\mathds{1}_{T}\rangle>0$.
\end{proof}

\subsection*{A question on maximal tori}

Our result for the diagonal torus, together with Zalesski's result for a Singer torus \cite{Zalesski2016}, suggests the following question.

\medskip
\noindent\textbf{Question.}
Let $T$ be a maximal torus of $G$ (not necessarily the diagonal torus). If $\chi\in\operatorname{Irr}(G)$ is trivial on $Z$, does
$$
\chi(1)\geq |T/Z|
\quad\Longrightarrow\quad
\langle \chi|_T,\mathds{1}_T\rangle>0
$$ always hold?
\medskip

When $T$ is the diagonal torus, one can show (cf.~Remark~\ref{rem:Ind}) that $\chi\in \Irr(G)$ is nonlinear if and only if $\chi(1)\geq |T/Z|$ (even though this is not the best possible bound). Therefore, in this case, a positive answer to the above question follows from Theorem~\ref{thm: Restriction to T}. 

For a Singer torus, the implication follows from \cite[Theorem~1.1]{Zalesski2016}. Computations in GAP verify the implication for every maximal torus type in $\mathrm{GL}_3(3)$, $\mathrm{GL}_3(4)$, $\mathrm{GL}_3(5)$, and $\mathrm{GL}_4(3)$.

Some restriction on the degree is necessary. Indeed, in $\mathrm{PGL}_4(3)$ there is an irreducible character $\psi$ of  degree $39$ which does not contain the trivial character when restricted to the Singer torus $\overline{T}$ of order $40$. In particular, the proposed degree threshold cannot be lowered by one in general.

\section{The character $\sigma$ and its square}

We continue to write $G$ for $\GL_{n}(q)$. In this section we construct
a certain irreducible character $\sigma$ of $G$ and state our second
main result, namely that $\sigma^{2}$ contains every irreducible
character of $G$ that is trivial on the centre. The proof appears
in the following section. From now on, let 
\[
d:=\gcd(n,q-1).
\]

\begin{lem}
\label{lem:The-maximal-order-of-lin-char}There exists an $\epsilon\in\Irr(\F^{\times}_{q})$
such that $\epsilon\circ\det\in\Irr(G)$ is trivial on $Z$ and $\ord(\epsilon\circ\det)=d$.
Moreover, when $G\neq\GL_{2}(2)$, then the maximal order of any linear
character of $G$ that is trivial on $Z$ is $d$. 
\end{lem}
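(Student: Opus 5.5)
The plan is to reduce everything to characters of the cyclic group $\F^{\times}_{q}$. First I would recall that when $G\neq\GL_{2}(2)$, the derived subgroup of $G$ is $\SL_{n}(q)$. Hence every linear character of $G$ has the form $\chi\circ\det$ for a unique $\chi\in\Irr(\F^{\times}_{q})$, and the order of $\chi\circ\det$ equals the order of $\chi$, since $\det$ is surjective. For $z=aI\in Z$ we have $\det(z)=a^{n}$. So $\chi\circ\det$ is trivial on $Z$ if and only if $\chi$ is trivial on the subgroup $(\F^{\times}_{q})^{n}$ of $n$-th powers.

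Next I would identify this subgroup. Since $\F^{\times}_{q}$ is cyclic of order $q-1$, the image of $a\mapsto a^{n}$ is the unique subgroup of index $\gcd(n,q-1)=d$, namely $(\F^{\times}_{q})^{d}$. The characters of $\F^{\times}_{q}$ trivial on this subgroup are exactly the characters of the quotient $\F^{\times}_{q}/(\F^{\times}_{q})^{d}$, which is cyclic of order $d$. This gives both parts of the statement:
\begin{itemize}
\item choosing $\epsilon$ to be a generator of this character group gives $\ord(\epsilon)=d$, so $\epsilon\circ\det$ has order $d$ and is trivial on $Z$;
\item every such $\chi$ has order dividing $d$, so $d$ is the maximal order of a linear character of $G$ trivial on $Z$.
\end{itemize}

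The existence part does not need $G\neq\GL_{2}(2)$: the character $\epsilon\circ\det$ is defined for every $G$, and its order is $d$ because $\det$ is surjective. Only the maximality claim uses $[G,G]=\SL_{n}(q)$. That identity fails for $\GL_{2}(2)\cong S_{3}$, where $\SL_{2}(2)=G$ but $[G,G]=A_{3}$, so the sign character is a nontrivial linear character trivial on $Z=1$ while $d=1$.

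The only real input is the standard fact that $[\GL_{n}(q),\GL_{n}(q)]=\SL_{n}(q)$ outside the case $\GL_{2}(2)$. I would cite this, or derive it from the perfectness of $\SL_{n}(q)$ except for $\SL_{2}(2)$ and $\SL_{2}(3)$. The case $\SL_{2}(3)$ needs a separate check: there one verifies directly that the commutators in $\GL_{2}(3)$ generate $\SL_{2}(3)$, since the elementary transvections are commutators of diagonal and unipotent elements whenever $q>2$. This commutator computation is the only step needing care, and I would handle it uniformly via $[\diag(t,1),I+E_{12}]=I+(t-1)E_{12}$ with $t\neq1$, which exists as soon as $q>2$. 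For $q=2$ and $n\geq3$, perfectness of $\SL_{n}(2)$ gives the same conclusion.
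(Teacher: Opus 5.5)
Your proof is correct and follows essentially the same route as the paper: use $[G,G]=\SL_{n}(q)$ for $G\neq\GL_{2}(2)$ to write every linear character as $\chi\circ\det$, note that triviality on $Z$ forces $\ord(\chi)\mid\gcd(n,q-1)=d$, and get existence from the cyclicity of $\Irr(\F^{\times}_{q})$ together with the surjectivity of $\det$. Your description via characters of $\F^{\times}_{q}/(\F^{\times}_{q})^{d}$ is only a repackaging of that argument, and your commutator sketch using $[\diag(t,1),I+E_{12}]$ correctly justifies the fact $[G,G]=\SL_{n}(q)$, which the paper simply quotes as standard.
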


\begin{proof}
For $G=\GL_{2}(2)$, $d=1$ and $\epsilon=\mathds{1}$. We now assume
that $G\neq\GL_{2}(2)$ and prove both assertions of the lemma. Since
$G\neq\GL_{2}(2)$, we have $[G,G]=\SL_{n}(q)$, so any linear character
of $G$ is of the form $\epsilon\circ\det\in\Irr(G)$ for $\epsilon\in\Irr(\F^{\times}_{q})$.
Since $\epsilon^{q-1}=\mathds{1}$, we have $\ord(\epsilon\circ\det)\mid q-1$
and the condition that $\epsilon\circ\det$ is trivial on $Z$ is
equivalent to $(\epsilon\circ\det)^{n}=\mathds{1}$, so also $\ord(\epsilon\circ\det)\mid n$.
Thus $\ord(\epsilon\circ\det)\mid d$, so the maximal possible order
is $d$. On the other hand, since $\Irr(\F^{\times}_{q})$ is cyclic,
it has an element $\epsilon$ of order $d$ and since $\det:G\rightarrow\F^{\times}_{q}$
is surjective, this implies that $\epsilon\circ\det$ has order $d$.
\end{proof}

\begin{defn}
\label{def: Char sigma}Fix $n$ and $q$. Let $P$ be the block-upper
subgroup of $G=\GL_{n}(q)$ with $d$ blocks of size $m:=n/d$ and
let $\epsilon$ be an element of $\Irr(\F^{\times}_{q})$ such that
$\epsilon\circ\det$ is of order $d$ (which exists by Lemma~\ref{lem:The-maximal-order-of-lin-char}).
Let $\St_{m}$ denote the Steinberg character of $\GL_{m}(q)$ and
define the parabolically induced character
\[
\sigma=R^{G}_{L}(\St_{m},\epsilon\St_{m},\dots,\epsilon^{d-1}\St_{m}).
\]
\end{defn}

As usual, $\St_{1}$ of $\GL_{1}(q)$ is the trivial character. Note
that when $d=1$, $\sigma=\St_{n}$. We also observe that $\sigma$
is irreducible because the $d$ components of the inducing character
are distinct, as the following lemma shows.
\begin{lem}
Assume that $d>1$. Then $\epsilon^{i}\St_{m}=\epsilon^{j}\St_{m}$,
for some $0\leq i,j\leq d-1$ only if $i=j$. Thus $\sigma$ is irreducible.
\end{lem}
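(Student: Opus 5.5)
We need to show that $\epsilon^{i}\St_{m}\neq\epsilon^{j}\St_{m}$ for $0\le i<j\le d-1$, and then deduce irreducibility of $\sigma$ from standard Harish--Chandra theory. Setting $k=j-i$, with $1\le k\le d-1$, and multiplying by $\epsilon^{-i}\circ\det$, the equality would become $\St_{m}=\epsilon^{k}\St_{m}$. Since $\epsilon$ has order exactly $d$, the character $\epsilon^{k}$ is a nontrivial element of $\Irr(\F^{\times}_{q})$. It therefore suffices to show that $\St_{m}\neq\chi\St_{m}$ for every nontrivial $\chi\in\Irr(\F^{\times}_{q})$.

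\emph{Case $m=1$.} Here $\St_{1}=\mathds{1}$, so $\chi\St_{1}=\chi\neq\mathds{1}$.

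\emph{Case $m\geq2$.} I would evaluate both characters on a suitable element. The Steinberg character is nonzero on every semisimple element; explicitly, $\St_{m}(s)=\pm|C_{G}(s)|_{p}$. Choose $s=\diag(a,1,\dots,1)\in\GL_{m}(q)$ with $a\in\F^{\times}_{q}$ such that $\chi(a)\neq1$, which is possible because $\chi$ is nontrivial. Then $(\chi\St_{m})(s)=\chi(a)\St_{m}(s)\neq\St_{m}(s)$, since $\St_{m}(s)\neq0$. An alternative argument avoids character values altogether: $\St_{m}$ restricted to the Borel subgroup contains a linear constituent, namely the trivial character of the torus extended by the Gelfand--Graev-type character, and one compares these under the twist. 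The semisimple-value argument is cleaner, so I would use it.

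\emph{Irreducibility.} By Harish--Chandra theory (Howlett--Lehrer, or Zelevinsky's theory for $\GL_{n}$), the functor $R^{G}_{L}$ applied to a cuspidal-support-type argument gives
\[
\operatorname{End}(R^{G}_{L}\lambda)\cong\C[W(L,\lambda)],
\]
where $W(L,\lambda)=\Stab_{N_{G}(L)/L}(\lambda)$. Here $N_{G}(L)/L\cong S_{d}$ permutes the $d$ blocks, so the stabiliser of $\lambda=(\St_{m},\epsilon\St_{m},\dots,\epsilon^{d-1}\St_{m})$ is trivial exactly when the components are pairwise distinct, which we have just shown. However, the $\St_{m}$ are not cuspidal, so the Howlett--Lehrer formula does not apply directly. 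Instead I would use Zelevinsky's result that for $\GL$, parabolic induction $\rho_{1}\times\cdots\times\rho_{d}$ is irreducible when the cuspidal supports of the $\rho_{i}$ are pairwise disjoint. The cuspidal support of $\epsilon^{i}\St_{m}$ is $m$ copies of the character $\epsilon^{i}$ of $\GL_{1}$, and these supports are pairwise disjoint because the $\epsilon^{i}$ are distinct. Equivalently, one can invoke Mackey's formula to show that $\langle\sigma,\sigma\rangle=1$: the double cosets $P\backslash G/P$ contributing terms pair Jordan--Hölder/Harish--Chandra constituents, and only the identity permutation matches the cuspidal data.

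The main obstacle is this last step, which needs the disjointness of cuspidal supports rather than mere distinctness of the components. I would cite Zelevinsky (or Howlett--Lehrer together with the Harish--Chandra series decomposition), noting that the Harish--Chandra series of $\epsilon^{i}\St_{m}$ is determined by $(T_{m},\epsilon^{i}\circ\det)$.
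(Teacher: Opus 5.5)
Your proposal is correct and is essentially the paper's proof. It makes the same reduction to showing $\epsilon^{k}\St_{m}\neq\St_{m}$ whenever $\epsilon^{k}\neq\mathds{1}$, uses the same test element $\mathrm{diag}(a,1,\dots,1)$ on which $\St_{m}$ does not vanish, and makes the same appeal to Zelevinsky's criterion that parabolic induction is irreducible when the cuspidal supports (here $m$ copies of $\epsilon^{i}$) are pairwise disjoint. The asides on Gelfand--Graev restrictions, Howlett--Lehrer and the Mackey sketch are not needed. You are right that disjointness of cuspidal supports, not mere distinctness of the components, is what gives irreducibility.
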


\begin{proof}
Suppose that $\epsilon^{i}\St_{m}=\epsilon^{j}\St_{m}$ with $i\geq j$.
Then 
\[
\langle\epsilon^{i}\St_{m},\epsilon^{j}\St_{m}\rangle=\langle\epsilon^{i-j}\St_{m},\St_{m}\rangle,
\]
so it is enough to prove that $\epsilon^{i}\St_{m}\neq\St_{m}$, for
every $1\leq i\leq d-1$, that is, whenever $\epsilon^{i}\neq\mathds{1}$.
Suppose therefore that $\epsilon^{i}\neq\mathds{1}$. Then there exists
an element $a\in\F^{\times}_{q}$ such that $\epsilon^{i}(a)\neq1$.
Let 
\[
x=\left(\begin{smallmatrix}a\\
 & 1\\
 &  & \ddots\\
 &  &  & 1
\end{smallmatrix}\right)\in\GL_{m}(q).
\]
By general properties of the Steinberg character values (see \cite[9.3]{dignemichel}),
we have 
\[
\St_{m}(x)=|C_{\GL_{m}(q)}(x)|_{p}\neq0,
\]
so indeed, $(\epsilon^{i}\St_{m})(x)=\epsilon^{i}(a)\St_{m}(x)\neq\St_{m}(x)$.

The irreducibility of $\sigma$ now follows from \cite[Proposition~2.6]{Zelevinsky},
since the cuspidal support of $\St_{m}$ is $\mathds{1}_{\GL_{1}(\F_{q})}$
(with multiplicity $m$) and hence the cuspidal support of each $\epsilon^{i}\St_{m}$
is $\epsilon^{i}$ (with multiplicity $m$).
\end{proof}

Our second main result is the following.
\begin{thm}
\label{thm: Main thm 2}Every irreducible character of $\GL_{n}(q)$
that is trivial on the centre is contained in $\sigma^{2}$.
\end{thm}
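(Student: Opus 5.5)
We split the irreducible characters $\rho$ of $G$ that are trivial on $Z$ into the nonlinear ones and the linear ones, and treat the two classes separately.

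\emph{Nonlinear characters.} By Theorem~\ref{thm: Restriction to T}, $\rho|_T\supseteq\mathds{1}_T$. Hence it suffices to prove that $\sigma^2\supseteq\Ind^G_T\mathds{1}_T$, or at least that every constituent of $\Ind^G_T\mathds{1}_T$ lies in $\sigma^2$. By Frobenius reciprocity this reduces to showing that $\sigma|_T\cdot\overline{\sigma|_T}$-type contributions suffice. More concretely, if $\rho\subseteq\Ind^G_T\mathds{1}$, then
\[
\langle\rho,\sigma\bar\sigma\rangle\geq\langle\rho,\Ind^G_T(\sigma|_T\overline{\sigma|_T})\rangle\cdot(\text{positive}),
\]
which is not quite Frobenius. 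A cleaner route is the following. Since $\sigma$ is trivial on $Z$ and rational-valued (Harish-Chandra induction of $\St_m$ twisted by $\epsilon^i$ with the set $\{\epsilon^i\}$ closed under inversion, so $\bar\sigma=\sigma$), we have $\sigma^2=\sigma\bar\sigma$. I would compute $\sigma$ on $T$. Write $\sigma=\Ind^G_P\Inf(\lambda)$ and apply Mackey to $\Res_T$: the double cosets $T\backslash G/P$ include the cosets $wP$ for permutations $w$, and each gives $\Ind^T_{T}({}^w\lambda|_T)=\lambda^w|_T$. Now $\St_m|_{T_m}$ contains the regular character of $T_m$ (indeed $\St_m|_{B}=\Ind_{T}^{B}$-like, since $\St_m$ restricted to the unipotent radical is the regular character and $\St_m|_{T_m}\supseteq$ every character of $T_m$; alternatively $\St_m(t)=\pm|C(t)|_p$ gives $\langle\St_m|_{T_m},\theta\rangle>0$ for all $\theta$). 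Thus $\sigma|_T\supseteq\mathbb{C}[T]$-ish, in fact $\sigma|_T$ contains every character of $T$. Then $\sigma\bar\sigma|_T$ contains every character of $T$, but we need $\sigma^2\supseteq\Ind^G_T\mathds{1}$ at the level of $G$. For this I would instead use Deligne--Lusztig theory as in Lemma~\ref{lem: St square contains all non-lin triv on Z}: $\sigma$ is (up to sign) a linear combination of $R_{\mathbf T}^G\theta$, and its values are supported on semisimple elements with $|\sigma(s)|$ computable. The goal is $\langle\rho,\sigma^2\rangle=\frac1{|G|}\sum_g\rho(g)|\sigma(g)|^2>0$; I would aim to show $|\sigma|^2$ is a nonnegative combination of $\Ind^G_{\mathbf T(\F_q)}\mathds{1}$ over tori, with the split torus having positive coefficient, mimicking \cite[7.15.2]{delignelusztig}. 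Here $\sigma$ is, up to sign, $\epsilon$-twisted duality image of $R^G_L(\epsilon^i)$, i.e.\ $\sigma=\pm D_G(\Ind^G_P(\epsilon^0,\dots,\epsilon^{d-1})\circ\det)$, so $\sigma$ vanishes off semisimple elements whose centraliser meets a conjugate of $L$ appropriately; this is the main obstacle.

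\emph{Linear characters.} The linear $\rho$ trivial on $Z$ are $\epsilon^k\circ\det$ for $0\le k<d$ by Lemma~\ref{lem:The-maximal-order-of-lin-char} (and just $\mathds{1}$ for $\GL_2(2)$). Since $\sigma$ is real, $\langle\epsilon^k,\sigma^2\rangle=\langle\epsilon^k\sigma,\sigma\rangle$. Twisting by $\epsilon^k$ permutes the blocks, $\epsilon^k\sigma=R^G_L(\epsilon^k\St_m,\dots,\epsilon^{k+d-1}\St_m)$, which is a rearrangement of the same inducing data, hence equal to $\sigma$ because Harish-Chandra induction is independent of the order of blocks (associate Levi data). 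So $\langle\epsilon^k,\sigma^2\rangle=1$. This is exactly why $\sigma$ rather than $\St_n$ is used.

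The hard step is the nonlinear case, namely producing $\langle\rho,\sigma^2\rangle>0$ from $\rho|_T\supseteq\mathds{1}_T$. I would first try the direct Mackey argument: write $\sigma^2\supseteq\Ind^G_P(\Inf\lambda\cdot\sigma|_P)$ and, using $\sigma|_P\supseteq\Inf(\bar\lambda)$ together with $\lambda\bar\lambda=\St_m^2\boxtimes\cdots\boxtimes\St_m^2$, get
\[
\sigma^2\supseteq R^G_L(\St_m^2,\dots,\St_m^2).
\]
By \cite[7.15.2]{delignelusztig} in each $\GL_m$ factor, this contains $R^G_L(\Ind^{\GL_m}_{T_m}\mathds{1},\dots)=\Ind^G_T\mathds{1}$ with positive rational weights. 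Hence by Frobenius reciprocity every $\rho$ with $\rho|_T\supseteq\mathds{1}_T$ appears. The point to verify carefully is the inclusion $\sigma|_P\supseteq\Inf^P_L(\bar\lambda)$, which holds since $\sigma=\Ind^G_P\Inf\lambda$ contains $\Inf\lambda$ on restriction to $P$, and $\bar\lambda$ is a block permutation of $\lambda$ via $\epsilon^{-i}=\epsilon^{d-i}$. This needs a conjugation by a block permutation $w$ normalising $L$, yielding ${}^wP$; I would handle that by using Mackey for $\Res_{P}\Ind_P$ at the double coset $PwP$, and I expect this alignment to be the main technical point.
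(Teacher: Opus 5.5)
Your treatment of the linear characters is correct, and it differs from the paper's. You use that Harish-Chandra induction for $\GL_n(q)$ is independent of the order of the blocks, so $\epsilon^k\sigma=\sigma=\bar\sigma$ and hence $\langle\epsilon^k\circ\det,\sigma^2\rangle=\langle\epsilon^k\sigma,\sigma\rangle=1$. The paper instead picks a suitable block permutation in the Mackey expansion (Proposition~\ref{prop:linear-general}). Your aside that $\sigma$ is trivial on $Z$ is false: its central character is $\epsilon^{md(d-1)/2}$, e.g.\ $\sigma=R^G_T(\mathds{1},\epsilon)$ for $\GL_2(q)$ with $q$ odd. This does no harm, since you only use that $\sigma$ is real.

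The nonlinear case has a genuine gap. The inclusion $\sigma^2\supseteq R^G_L(\St_m^2,\dots,\St_m^2)$ is fine. The identification $R^G_L(\Ind^{\GL_m(q)}_{T_m}\mathds{1},\dots,\Ind^{\GL_m(q)}_{T_m}\mathds{1})=\Ind^G_T\mathds{1}$ is false when $d>1$. Since inflation commutes with induction, $R^G_L(\Ind^L_T\mathds{1})=\Ind^G_P\Inf^P_L\Ind^L_T\mathds{1}=\Ind^G_{TU}\mathds{1}$, where $U$ is the unipotent radical of $P$. This is a proper subcharacter of $\Ind^G_T\mathds{1}=\Ind^G_{TU}\Ind^{TU}_T\mathds{1}$. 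Frobenius reciprocity would then need a $T$-fixed vector among the $U$-invariants of $\rho$, which $\rho|_T\supseteq\mathds{1}_T$ does not provide.

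No refinement of this route can work. By adjunction, $\langle\rho,R^G_L(\mu)\rangle=\langle{}^{*}R^G_L\rho,\mu\rangle=0$ for every cuspidal $\rho$, since $L\neq G$. So $R^G_L(\St_m^2,\dots,\St_m^2)$ contains no cuspidal character at all. For instance, for $\GL_2(3)$ you only obtain $R^G_T(\mathds{1},\mathds{1})=\mathds{1}+\St_2$. This misses the cuspidal character of degree $2$ that is trivial on $Z$ (the $2$-dimensional character of $S_4\cong\PGL_2(3)$).

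The missing idea is to use a different Mackey term: the one for the block-reversing permutation $w_0$, for which $P\cap{}^{w_0}P=L$. This yields ordinary induction from $L$ rather than Harish-Chandra induction:
\[
\sigma^2\supseteq\Ind^G_L(\lambda\,{}^{w_0}\lambda)=\epsilon^{-1}\Ind^G_L(\St_m^2,\dots,\St_m^2)
\]
(Lemma~\ref{lem:Mackey-Levi-term}). By the Deligne--Lusztig formula in each factor, $\Ind^G_L(\St_m^2,\dots,\St_m^2)$ contains every constituent of $\Ind^G_L\Ind^L_T\mathds{1}=\Ind^G_T\mathds{1}$. The twist by $\epsilon^{-1}$ is absorbed by applying Theorem~\ref{thm: Restriction to T} to $\epsilon\rho$, which is again nonlinear and trivial on $Z$. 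This is essentially the paper's argument. Alternatively, using the order-independence you already invoke, write $\bar\sigma=R^G_L({}^{w_0}\bar\lambda)$ and take the $w_0$-term of $\sigma\cdot\bar\sigma$. This gives $\sigma^2\supseteq\Ind^G_L(\lambda\bar\lambda)=\Ind^G_L(\St_m^2,\dots,\St_m^2)$ directly, with no twist needed.
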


We will give the proof of this theorem in the following section. Here
we only note that when $d=1$, Lemma~\ref{lem:The-maximal-order-of-lin-char}
says that, except for $\GL_{2}(2)$, the only linear character of
$\GL_{n}(q)$ that is trivial on $Z$ is the trivial character. Hence,
in this case, as $\sigma=\St_{n}$, Theorem~\ref{thm: Main thm 2} follows from Theorem~\ref{thm: Restriction to T} and Lemma~\ref{lem: St square contains all non-lin triv on Z} (together with a direct check for $\GL_{2}(2)$).

We also note that when $d=1$, it is well known that $\mathrm{PSL}_{n}(q)\cong\SL_{n}(q)\cong\PGL_{n}(q)$
is simple, except for $\mathrm{PGL}_{2}(2)$, so in this case Theorem~\ref{thm: Main thm 2}
also follows from, and coincides with, \cite[Theorem~1.2]{Heide-Saxl-Tiep-Zalesski}
(together with a direct check for $\mathrm{PGL}_{2}(2)$).

Identifying characters of $\GL_{n}(q)$ that are trivial on $Z$ with
characters of $\PGL_{n}(q)$, Theorem~\ref{thm: Main thm 2} says
that every irreducible character of $\PGL_{n}(q)$ is contained in
$\sigma^{2}$. However, $\sigma$ itself has central character $\epsilon^{m(d-1)d/2}$,
so need not be trivial on $Z$. In fact, as the following example
shows, there is in general no irreducible character of $\PGL_{2}(q)$
whose square contains every irreducible character.
\begin{example}
\label{exa:PGL2}Suppose that $n=2$ and that $q$ is odd (so that
$d=2$) and that $q\not\equiv1\pmod 4$. We show that there is no
irreducible character of $\PGL_{2}(q)$ whose square contains every
irreducible character. On the other hand, we also show that there
does exist an irreducible character of $G=\GL_{2}(q)$ whose square
contains every irreducible character of $G$ that is trivial on the
centre. By inspection of the character table of $\PGL_{2}(q)$, one
sees that the sum of the degrees of all the irreducible characters
is $q^{2}+1$, so if $\rho\in\Irr(\PGL_{2}(q))$ has square that contains
every irreducible character, then $\rho$ must have degree $q+1$
(and in particular, the square of the Steinberg character does not
contain every irreducible character of $\PGL_{2}(q)$). Write $B$
for the upper triangular subgroup and $T$ for the diagonal subgroup,
so that $R^{G}_{T}=\Ind^{G}_{B}\Inf^{B}_{T}(-)$. The irreducible
characters of $\PGL_{2}(q)$ of degree $q+1$, that is, the irreducible
characters of $G$ trivial on $Z$, are of the form $R^{G}_{T}(\alpha,\alpha^{-1})$,
where $\alpha\in\Irr(\F^{\times}_{q})$ is such that $\alpha\neq\alpha^{-1}$
and $(\alpha,\alpha^{-1})\in\Irr(T)$ is trivial on $Z$. 

Let $\epsilon\in\Irr(\F^{\times}_{q})$ be of order $2$, so that
$\epsilon\circ\det$ is a linear character of $G$ of order $2$ that
is trivial on $Z$. We show that either $\mathds{1}$ or $\epsilon\circ\det$
is not a contained in $R^{G}_{T}(\alpha,\alpha^{-1})^{2}$. Indeed,
\[
\langle\mathds{1},R^{G}_{T}(\alpha,\alpha^{-1})^{2}\rangle=\langle R^{G}_{T}(\alpha,\alpha^{-1}),\bar{R^{G}_{T}(\alpha,\alpha^{-1})}\rangle,
\]
which is $1$ if and only if $\text{\ensuremath{R^{G}_{T}}(\ensuremath{\alpha},\ensuremath{\alpha^{-1}})}$
is self-dual. Assume that $R^{G}_{T}(\alpha,\alpha^{-1})$ is self-dual.
Then 
\begin{multline*}
\langle\epsilon\circ\det,R^{G}_{T}(\alpha,\alpha^{-1})^{2}\rangle=\langle\epsilon R^{G}_{T}(\alpha,\alpha^{-1}),R^{G}_{T}(\alpha,\alpha^{-1})\rangle\\
=\sum_{g\in B\backslash G/B}\langle\epsilon\Inf^{B}_{T}(\alpha,\alpha^{-1})|_{B\cap\leftexp{g}{B}},\leftexp{g}{\Inf^{B}_{T}(\alpha,\alpha^{-1})}|_{B\cap\leftexp{g}{B}}\rangle\\
=\langle(\epsilon\circ\det|_{T})(\alpha,\alpha^{-1}),(\alpha,\alpha^{-1})\rangle+\langle(\epsilon\circ\det|_{T})(\alpha,\alpha^{-1}),(\alpha^{-1},\alpha)\rangle.
\end{multline*}
Here the first term is $0$ since $\epsilon\circ\det|_{T}\neq\mathds{1}_{T}$
and the second term is $0$ since if $\epsilon\circ\det|_{T}=(\alpha^{-2},\alpha^{2})$,
then $\epsilon(ab)=\alpha^{-2}(a)\alpha^{2}(b)$ for all $a,b\in\F^{\times}_{q}$,
so taking $a=b^{-1}$ implies that $\alpha^{4}=\mathds{1}$. But since
$q\not\equiv1\pmod 4$, there is no element of $\Irr(\F^{\times}_{q})$
of order $4$. Thus $\alpha^{2}=\mathds{1}$, contradicting $\alpha\neq\alpha^{-1}$.
Therefore $\epsilon\circ\det$ is not contained in $R^{G}_{T}(\alpha,\alpha^{-1})^{2}$.

On the other hand, now drop the condition $q\not\equiv1\pmod 4$ and
consider the character $\sigma=R^{G}_{T}(\mathds{1},\epsilon)$ of
$G$ (see Definition~\ref{def: Char sigma}; note that $\sigma$
is not trivial on $Z$). By direct computation of the character values
as in \cite[Theorem~3.1~(3)]{Aburto-Hageman-PantojaGL2} or by \cite[Theorem~1.1~(8)]{Gupta2025},
\[
\sigma^{2}=\Ind^{G}_{T}(\mathds{1},\epsilon^{2})+R^{G}_{T}(\epsilon,\epsilon)=\Ind^{G}_{T}\mathds{1}+\epsilon(\mathds{1}+\St_{2}).
\]
 It then follows from Theorem~\ref{thm: Restriction to T} that every
irreducible character of $\PGL_{2}(q)$ is either contained in $\Ind^{G}_{T}\mathds{1}$
or is linear (i.e., equal to $\epsilon\circ\det$), thus in either
case is contained in $\sigma^{2}$.
\end{example}

\begin{rem}
The preceding example shows in particular that $\PGL_{2}(3)$ does
not have the `tensor square property', even though this group has
the property that every irreducible character is contained in the
conjugation character (see \cite[Theorem~1.10]{Passman-conj_rep}
and note that $\PGL_{2}(3)\cong S_{4}$, or inspect the character
table to see that the sum of each row is non-zero). As shown in \cite{Heide-Saxl-Tiep-Zalesski},
these two properties are equivalent for all simple finite group of
Lie type, as the counterexamples that appear are exactly the same
for both properties.
\end{rem}

\section{Proof of Theorem~\ref{thm: Main thm 2}}

Let $P$ be a parabolic subgroup of $G$ (i.e., the $\F_{q}$-points
of a parabolic subgroup of the algebraic group $\GL_{n}$ over $\F_{q}$)
and $L$ a Levi subgroup of $P$. For $\lambda\in\Irr(L)$, let $\tilde{\lambda}=\Inf^{P}_{L}(\lambda)$
be its inflation to $P$ and $R^{G}_{L}(\lambda)=\Ind^{G}_{P}\tilde{\lambda}.$
By the well-known formula $\chi\Ind^{G}_{H}\rho=\Ind^{G}_{H}(\chi|_{H}\rho)$
and Mackey's induction-restriction formula, 

\begin{align*}
R^{G}_{L}(\lambda)^{2} & =\Ind^{G}_{P}(\tilde{\lambda}R^{G}_{L}(\lambda)|_{P})=\sum_{g\in P\backslash G/P}\Ind^{G}_{P}(\tilde{\lambda}\Ind^{P}_{P\cap\leftexp{g}{P}}\leftexp{g}{\tilde{\lambda}})\\
 & =\sum_{g\in P\backslash G/P}\Ind^{G}_{P\cap\leftexp{g}{P}}(\tilde{\lambda}|_{P\cap\leftexp{g}{P}}\leftexp{g}{\tilde{\lambda}}),
\end{align*}
where $\leftexp{g}{\tilde{\lambda}}$ denotes the restriction to $P\cap\leftexp{g}{P}$
of the representation of $\leftexp{g}{P}$ obtained from $\tilde{\lambda}$.
Thus, in particular, for each $g\in G$, 
\begin{equation}
R^{G}_{L}(\lambda)^{2}\supseteq\Ind^{G}_{P\cap\leftexp{g}{P}}(\tilde{\lambda}|_{P\cap\leftexp{g}{P}}\leftexp{g}{\tilde{\lambda}}).\label{eq:sigma-squared formula}
\end{equation}

From now on, we specialise to the situation considered in Definition~\ref{def: Char sigma},
that is, $P$ is the standard parabolic subgroup with $d$ blocks
of size $m$. Let $L=\GL_{m}(q)^{d}$ be the block diagonal subgroup
consisting of $d$ blocks of size $m$, let $\lambda=(\St_{m},\epsilon\St_{m},\dots,\epsilon^{d-1}\St_{m})\in\Irr(L)$
and put $\sigma=R^{G}_{L}(\lambda)$.
\begin{prop}
\label{prop:linear-general} Let $\chi\in\Irr(\F^{\times}_{q})$ be
such that $\chi\circ\det$ is trivial on $Z$. Then there exists a
$g\in N_{G}(L)$ such that 
\[
\lambda\leftexp{g}{\lambda}=(\chi\circ\det)|_{L}(\St^{2}_{m},\ldots,\St^{2}_{m}).
\]
Consequently, $\sigma^{2}$ contains every linear character of $G$
that is trivial on $Z$.
\end{prop}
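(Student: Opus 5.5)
Since $\chi\circ\det$ is trivial on $Z$ and $G\neq\GL_2(2)$ (for $G=\GL_2(2)$ we have $d=1$ and the claim is a direct check), the proof of Lemma~\ref{lem:The-maximal-order-of-lin-char} shows that $\ord(\chi)\mid d$. Hence $\chi=\epsilon^{k}$ for some $0\le k\le d-1$, because $\Irr(\F_q^\times)$ is cyclic and $\epsilon$ has order exactly $d$. I will take $g\in N_G(L)$ to be a block permutation matrix: each block is an $m\times m$ identity matrix placed according to a permutation $w\in S_d$. Conjugation by $g$ permutes the $\GL_m(q)$-factors of $L$. Therefore ${}^{g}\lambda$ has $i$-th component $\epsilon^{w(i)}\St_m$ for a suitable permutation $w$ of the indices $\{0,\dots,d-1\}$, where $w$ is $w$ or its inverse depending on conventions. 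Since $\St_m$ is invariant under the permutation itself, only the twists move.

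For the product $\lambda\,{}^{g}\lambda$, component $i$ is $\epsilon^{i+w(i)}\St_m^2$. I choose $w(i)\equiv k-i \pmod d$; this is a bijection of $\mathbb{Z}/d$. Then every component equals $\epsilon^{k}\St_m^2=\chi\St_m^2$. Also $(\chi\circ\det)|_L=(\chi\circ\det_m,\dots,\chi\circ\det_m)$, since $\det$ of a block diagonal matrix is the product of the block determinants. This gives the displayed identity.

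For the consequence, I apply \eqref{eq:sigma-squared formula} with this $g$. Because $g$ normalises $L$ and is a permutation matrix, $P\cap{}^{g}P$ contains $L$ and is a parabolic-type subgroup $L\cdot(U\cap{}^{g}U)$. On it, $\tilde\lambda\,{}^{g}\tilde\lambda$ restricts to the inflation of $\lambda\,{}^{g}\lambda$, since both factors are trivial on the unipotent radical parts meeting there; I need to check this. Rather than controlling that group precisely, I use Frobenius reciprocity:
\[
\langle\chi\circ\det,\Ind^{G}_{P\cap{}^{g}P}(\tilde\lambda\,{}^{g}\tilde\lambda)\rangle=\langle(\chi\circ\det)|_{P\cap{}^gP},\tilde\lambda\,{}^{g}\tilde\lambda\rangle.
\]
The group $P\cap{}^{g}P$ is $L\ltimes N$, where $N=U\cap{}^gU$ is normal and both characters are trivial on $N$. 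So this inner product equals the inner product on $L$, namely $\langle\chi\circ\det,\chi\St_m^2\rangle$ in each factor. That is
\[
\prod_{i}\langle\mathds 1,\St_m^2\rangle_{\GL_m(q)}=1,
\]
because $\St_m$ is real (self-dual), so $\langle\mathds{1},\St_m^2\rangle=\langle\St_m,\St_m\rangle=1$.

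The main obstacle is verifying that ${}^g\tilde\lambda$ is trivial on $N$, or at least that the restriction to $P\cap{}^gP$ factors through $L$. The point is that ${}^{g}\tilde\lambda$ is inflated from ${}^gP={}^gL\,{}^gU=L\,{}^gU$. Since $L$ normalises ${}^gU$ and $U$, the group $P\cap{}^gP$ equals $L(U\cap{}^gU)$, using the standard decomposition of intersections of parabolics sharing the Levi $L$. Both $\tilde\lambda$ and ${}^g\tilde\lambda$ are trivial on $U\cap{}^gU$.
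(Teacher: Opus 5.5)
Your proposal is correct and is essentially the paper's proof. You use the same block permutation realising $j\mapsto k-j$ and the same decomposition $P\cap{}^{g}P=L(U\cap{}^{g}U)$, with both inflated characters trivial on $U\cap{}^{g}U$. Your Frobenius-reciprocity computation $\prod_i\langle\mathds{1},\St_m^2\rangle=1$ is just the paper's containment $\chi\Ind^G_H\mathds{1}\supseteq\chi\circ\det$ written out explicitly. The one step you leave implicit is that for $G\neq\GL_2(2)$ every linear character of $G$ has the form $\chi\circ\det$ (since $[G,G]=\SL_n(q)$); this is what turns the displayed identity into the ``consequently'' clause.
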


\begin{proof}
We first note that there exists an $i\in\{0,\dots,d-1\}$ such that
$\chi=\epsilon^{i}$. Indeed, since $\chi\circ\det$ is trivial on
$Z$, we have $\chi^{n}=\mathds{1}$ and since also $\chi^{q-1}=\mathds{1}$,
it follows that $\ord(\chi)\mid\gcd(n,q-1)=d$. Thus $\chi$ lies
in the subgroup of $\Irr(\F^{\times}_{q})$ of order $d$  and by
assumption, $\epsilon$ is a generator of that subgroup. There exists
a permutation matrix $g\in N_{G}(L)$ such that 
\[
\leftexp{g}{\lambda}=(\epsilon^{d-j+i}\St_{m})^{d-1}_{j=0}=(\epsilon^{d+i}\St_{m},\dots,\epsilon^{i+1}\St_{m})=(\epsilon^{i}\St_{m},\dots,\epsilon^{i+1}\St_{m}).
\]
 Hence 
\[
\lambda\leftexp{g}{\lambda}=(\epsilon^{i}\St^{2}_{m},\dots,\epsilon^{i}\St^{2}_{m})=(\chi\circ\det)|_{L}(\St^{2}_{m},\ldots,\St^{2}_{m}).
\]

Let $U$ be the block-upper unipotent subgroup of $P$ such that $P=LU$
and $L\cap U=\{1\}$. Let $H=P\cap\leftexp{g}{P}$. Since $g\in N_{G}(L)$,
we have $H=L\bigl(U\cap\leftexp{g}{U}\bigr)$. As both $\tilde{\lambda}|_{H}$
and $\leftexp{g}{\tilde{\lambda}}$ are trivial on $U\cap\leftexp{g}{U}$,
\[
\tilde{\lambda}|_{H}\leftexp{g}{\tilde{\lambda}}=\Inf^{H}_{L}(\lambda\leftexp{g}{\lambda})=(\chi\circ\det)|_{H}\,\Inf^{H}_{L}(\St^{2}_{m},\ldots,\St^{2}_{m}).
\]
Consequently, by (\ref{eq:sigma-squared formula}), 
\begin{align*}
\sigma^{2} & \supseteq\Ind^{G}_{H}\big((\chi\circ\det)|_{H}\,\Inf^{H}_{L}(\St^{2}_{m},\ldots,\St^{2}_{m})\big)=\chi\Ind^{G}_{H}\Inf^{H}_{L}(\St^{2}_{m},\ldots,\St^{2}_{m})\\
 & \supseteq\chi\Ind^{G}_{H}\mathds{1}\supseteq\chi\circ\det.
\end{align*}
where we have used that $\St^{2}_{m}\supseteq\mathds{1}$, as $\langle\St^{2}_{m},\mathds{1}\rangle=\langle\St_{m},\St_{m}\rangle=1$.
This proves that $\sigma^{2}$ contains every linear character of
$G$ that is trivial on $Z$ for $G\neq\GL_{2}(2)$, as in this case
every linear character is of the form $\chi\circ\det$. For $G=\GL_{2}(2)$,
the assertion follows by a direct check, as $\sigma=\St_{2}$.
\end{proof}

\begin{lem}
\label{lem:Mackey-Levi-term} There exists $g\in N_{G}(L)$ such that
$P\cap\leftexp{g}{P}=L$ and 
\[
\lambda\leftexp{g}{\lambda}=(\varepsilon^{-1}\circ\det)|_{L}(\St^{2}_{m},\ldots,\St^{2}_{m}).
\]
Consequently, 
\[
\sigma^{2}\supseteq\varepsilon^{-1}\Ind^{G}_{L}(\St^{2}_{m},\ldots,\St^{2}_{m}).
\]
\end{lem}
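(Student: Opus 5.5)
We take $g=w_0$, the block-reversal permutation matrix in $N_G(L)$: it maps the $j$-th $m\times m$ block to the $(d+1-j)$-th block (with blocks indexed $1,\dots,d$). Conjugation by $w_0$ sends block-upper matrices to block-lower ones. Hence $\leftexp{g}{P}=P^{-}=LU^{-}$ is the opposite parabolic, and $P\cap P^{-}=L$ because $U\cap P^{-}=\{1\}$. When $d=1$ we have $g=1$, $P=L=G$ and $\varepsilon=\mathds{1}$, so everything is trivial. (Here $\varepsilon$ denotes the character $\epsilon$ of Definition~\ref{def: Char sigma}.)

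Next we compute $\leftexp{g}{\lambda}$. Since $g$ reverses the blocks, the $j$-th component of $\leftexp{g}{\lambda}$ (for $j=0,\dots,d-1$) is the $(d-1-j)$-th component of $\lambda$, namely $\epsilon^{d-1-j}\St_m$. We should check that conjugation by a permutation matrix only permutes the factors of $L$ without twisting them. Indeed, $g$ conjugates each block $\GL_m(q)$ to another block by the identity map on matrices, so the inflation to $H$ is just the permuted tuple. Therefore
\[
\lambda\leftexp{g}{\lambda}=(\epsilon^{j}\epsilon^{d-1-j}\St_m^2)_{j=0}^{d-1}=(\epsilon^{d-1}\St^2_m,\dots,\epsilon^{d-1}\St^2_m).
\]
Since $\epsilon$ has order $d$, we have $\epsilon^{d-1}=\epsilon^{-1}$. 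Also $(\epsilon^{-1}\circ\det)|_L$ restricts on each block to $\epsilon^{-1}\circ\det_{\GL_m}$, because the determinant of a block-diagonal matrix is the product of the block determinants. This gives the displayed identity $\lambda\leftexp{g}{\lambda}=(\varepsilon^{-1}\circ\det)|_L(\St^2_m,\dots,\St^2_m)$.

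For the consequence we apply (\ref{eq:sigma-squared formula}) with this $g$. Since $H=P\cap\leftexp{g}{P}=L$, both $\tilde\lambda|_L$ and $\leftexp{g}{\tilde\lambda}|_L$ equal the restrictions of the inflations to $L$, so they are simply $\lambda$ and $\leftexp{g}{\lambda}$. Hence
\[
\sigma^2\supseteq\Ind^G_L(\lambda\leftexp{g}{\lambda})=\Ind^G_L\bigl((\varepsilon^{-1}\circ\det)|_L(\St^2_m,\dots,\St^2_m)\bigr)=\varepsilon^{-1}\Ind^G_L(\St^2_m,\dots,\St^2_m),
\]
where the last step uses the projection formula $\chi\Ind^G_H\rho=\Ind^G_H(\chi|_H\rho)$. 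The only point needing care is the identification of $\leftexp{g}{\tilde\lambda}$ restricted to $L$ with the block-permuted tuple; this is routine, so we expect no real obstacle.
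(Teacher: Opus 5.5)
Your proposal is correct and follows essentially the same argument as the paper. You take the block-reversal matrix so that $P\cap {}^{g}P=L$, and reversing the tuple gives $\epsilon^{d-1}=\epsilon^{-1}$ on every block. The inclusion then follows from the Mackey-type containment (\ref{eq:sigma-squared formula}) and the projection formula.
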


\begin{proof}
Let $g\in N_{G}(L)$ be the block permutation matrix that reverses
the order of the $d$ blocks. Thus the permutation induced by $g$
on the set of blocks is $j\longmapsto d-1-j,\qquad j\in\{0,\ldots,d-1\}$.
Since $P$ is the subgroup of upper block-triangular matrices, $\leftexp{g}{P}$
is the subgroup of lower block-triangular matrices. Hence $P\cap\leftexp{g}{P}=L$.
Conjugation by $g$ reverses the components of $\lambda$. By a computation
similar to the one in the proof of Proposition~\ref{prop:linear-general},
\[
\lambda\leftexp{g}{\lambda}=(\epsilon^{d-1}\St^{2}_{m},\dots,\epsilon^{d-1}\St^{2}_{m})=(\epsilon^{-1}\circ\det)|_{L}(\St^{2}_{m},\ldots,\St^{2}_{m}).
\]

By (\ref{eq:sigma-squared formula}) and the fact that $P\cap\leftexp{g}{P}=L$,
\[
\sigma^{2}\supseteq\Ind^{G}_{L}(\lambda\leftexp{g}{\lambda})\supseteq\varepsilon^{-1}\Ind^{G}_{L}(\St^{2}_{m},\ldots,\St^{2}_{m}).
\]
\end{proof}

\begin{lem}
\label{lem:restriction-to-Levi} Let 
\[
M=\GL_{n_{1}}(q)\times\cdots\times\GL_{n_{r}}(q)
\]
be a Levi subgroup containing $T$. Write $T=T_{1}\times\cdots\times T_{r},$
where $T_{i}$ is the diagonal torus of $\GL_{n_{i}}(q)$. Suppose
that $\rho\in\Irr(G)$ contains $\mathds{1}_{T}$ on restriction to
$T$. Then there exists an irreducible constituent 
\[
\eta=(\eta_{1},\dots,\eta_{r})=\eta_{1}\boxtimes\cdots\boxtimes\eta_{r}
\]
of $\rho|_{M}$ such that for every $i$, $\eta_{i}$ contains $\mathds{1}_{T_{i}}$
on restriction to $T_{i}$. In particular, each $\eta_{i}$ is trivial
on the centre of $\GL_{n_{i}}(q)$. 
\end{lem}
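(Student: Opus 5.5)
This follows from Frobenius reciprocity and transitivity of restriction. Since $T\subseteq M$, we have $\rho|_{T}=(\rho|_{M})|_{T}$. Decompose
\[
\rho|_{M}=\sum_{\eta\in\Irr(M)}m_{\eta}\,\eta,
\]
with $m_{\eta}=\langle\rho|_{M},\eta\rangle\geq0$. Taking inner products with $\mathds{1}_{T}$ gives
\[
0<\langle\rho|_{T},\mathds{1}_{T}\rangle=\sum_{\eta}m_{\eta}\langle\eta|_{T},\mathds{1}_{T}\rangle .
\]
All terms are nonnegative, so at least one irreducible constituent $\eta$ of $\rho|_{M}$ satisfies $\langle\eta|_{T},\mathds{1}_{T}\rangle>0$.

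Next, use the factorisation $M=\GL_{n_{1}}(q)\times\cdots\times\GL_{n_{r}}(q)$. Every irreducible character of $M$ is an outer tensor product $\eta=\eta_{1}\boxtimes\cdots\boxtimes\eta_{r}$ with $\eta_{i}\in\Irr(\GL_{n_{i}}(q))$. The torus splits compatibly as $T=T_{1}\times\cdots\times T_{r}$ with $T_{i}\subseteq\GL_{n_{i}}(q)$. Hence $\eta|_{T}=\eta_{1}|_{T_{1}}\boxtimes\cdots\boxtimes\eta_{r}|_{T_{r}}$ and $\mathds{1}_{T}=\mathds{1}_{T_{1}}\boxtimes\cdots\boxtimes\mathds{1}_{T_{r}}$. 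Inner products of outer tensor products over a direct product are multiplicative, so
\[
\langle\eta|_{T},\mathds{1}_{T}\rangle=\prod_{i=1}^{r}\langle\eta_{i}|_{T_{i}},\mathds{1}_{T_{i}}\rangle .
\]
Each factor is a nonnegative integer and the product is positive. Therefore every factor is positive, i.e.\ $\eta_{i}|_{T_{i}}\supseteq\mathds{1}_{T_{i}}$ for all $i$.

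For the final assertion, the centre $Z_{i}$ of $\GL_{n_{i}}(q)$ consists of scalar matrices, so $Z_{i}\subseteq T_{i}$. Because $\eta_{i}$ is irreducible, Schur's lemma says $Z_{i}$ acts through a single central character $\omega_{i}$, and $\eta_{i}|_{Z_{i}}=\eta_{i}(1)\,\omega_{i}$. On the other hand, $\eta_{i}|_{T_{i}}$ contains $\mathds{1}_{T_{i}}$, so $\eta_{i}|_{Z_{i}}$ contains $\mathds{1}_{Z_{i}}$. Hence $\omega_{i}=\mathds{1}$, and $\eta_{i}$ is trivial on $Z_{i}$.

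There is no real obstacle in this argument. The only point needing care is the multiplicativity of inner products for outer tensor products over the direct products $M$ and $T$, which is a standard fact.
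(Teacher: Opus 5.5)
Your proposal is correct and follows the paper's argument: decompose $\rho|_{M}$, pick a constituent $\eta$ with $\langle\eta|_{T},\mathds{1}_{T}\rangle>0$, and use multiplicativity of the inner product over $T=T_{1}\times\cdots\times T_{r}$. Your Schur's lemma argument for the final assertion, using $Z_{i}\subseteq T_{i}$, supplies a step the paper leaves implicit.
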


\begin{proof}
Decompose $\rho|_{M}=\sum_{\eta\in\Irr(M)}m_{\eta}\eta$, where $m_{\eta}\geq0$.
Restricting further to $T$, we obtain $\rho|_{T}=\sum_{\eta\in\Irr(M)}m_{\eta}\,\eta|_{T}$
and thus 
\[
\langle\rho|_{T},\mathds{1}_{T}\rangle_{T}=\sum_{\eta\in\Irr(M)}m_{\eta}\langle\eta|_{T},\mathds{1}_{T}\rangle_{T}.
\]
By assumption, the left-hand side is positive, hence there exists
some irreducible constituent $\eta\in\Irr(M)$ such that $\langle\eta|_{T},\mathds{1}_{T}\rangle_{T}>0$.
Now every irreducible character of $M$ is of the form
\[
\eta=(\eta_{1},\dots,\eta_{r}),\qquad\eta_{i}\in\Irr(\GL_{n_{i}}(q)).
\]
As $T=T_{1}\times\cdots\times T_{r}$, we have $\eta|_{T}=(\eta_{1}|_{T_{1}},\dots,\eta_{r}|_{T_{r}})$
and therefore, 
\[
\langle\eta|_{T},\mathds{1}_{T}\rangle_{T}=\prod^{r}_{i=1}\langle\eta_{i}|_{T_{i}},\mathds{1}_{T_{i}}\rangle_{T_{i}}.
\]
Since this product is positive, each factor must be positive, that
is, $\langle\eta_{i}|_{T_{i}},\mathds{1}_{T_{i}}\rangle_{T_{i}}>0$
for all $i$.
\end{proof}

We can now finish the proof Theorem~\ref{thm: Main thm 2}.
\begin{proof}
Let $\rho\in\Irr(G)$ be trivial on $Z$. If $\rho$ is linear, the
result follows from Proposition~\ref{prop:linear-general}, so assume
that $\rho$ is nonlinear. 

Let $T=T_{1}\times\cdots\times T_{d}$ be the diagonal torus of $G$,
where $T_{i}$ is the diagonal torus of the $i$th factor $\GL_{m}(q)$
of $L=\GL_{m}(q)^{d}$. By Theorem~\ref{thm: Restriction to T},
$\rho$ contains $\mathds{1}_{T}$ on restriction to $T$, so by Lemma~\ref{lem:restriction-to-Levi},
$\rho|_{L}$ has an irreducible constituent $\eta=(\eta_{1},\dots,\eta_{d})$
such that $\left\langle \eta_{i}|_{T_{i}},\mathds{1}_{T_{i}}\right\rangle _{T_{i}}>0$
for every $i\in\{1,\ldots,d\}$. Thus by Lemma~\ref{lem: St square contains all non-lin triv on Z},
$\eta_{i}\subseteq\St^{2}_{m}$ for every $i$.
It follows that $\eta\subseteq(\St^{2}_{m},\ldots,\St^{2}_{m})$
and since $\eta$ is a constituent of $\rho|_{L}$, Frobenius reciprocity
implies that 
\[
\rho\subseteq\Ind^{G}_{L}\eta\subseteq\Ind^{G}_{L}(\St^{2}_{m},\ldots,\St^{2}_{m}).
\]

Since $\rho$ was an arbitrary nonlinear irreducible character of
$G$ trivial on $Z$ and $\varepsilon\circ\det$ is trivial on $Z$,
$\epsilon\rho$ is again nonlinear, irreducible and trivial on $Z$,
so the above argument also implies that 
\[
\epsilon\rho\subseteq\Ind^{G}_{L}(\St^{2}_{m},\ldots,\St^{2}_{m}).
\]
After twisting by $\epsilon^{-1}\circ\det$ and applying Lemma~\ref{lem:Mackey-Levi-term},
we thus obtain 
\[
\rho\subseteq\varepsilon^{-1}\Ind^{G}_{L}(\St^{2}_{m},\ldots,\St^{2}_{m})\subseteq\sigma^{2}.
\]
\end{proof}

\begin{cor}
\label{cor: central char extends then square}Suppose that $\chi\in\Irr(Z)$
extends to a character $\hat{\chi}$ of $G$. Then $(\hat{\chi}\sigma)^{2}$
contains all the irreducible characters of $G$ with central character
$\chi^{2}$.
\end{cor}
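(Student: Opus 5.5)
The plan is to reduce to Theorem~\ref{thm: Main thm 2} by twisting with the extension $\hat{\chi}$. Since $\chi\in\Irr(Z)$ is linear, so is its extension $\hat{\chi}$. Hence $\hat{\chi}\sigma$ is irreducible and
\[
(\hat{\chi}\sigma)^{2}=\hat{\chi}^{2}\sigma^{2}.
\]
Let $\rho\in\Irr(G)$ have central character $\chi^{2}$. Then $\hat{\chi}^{-2}\rho$ is irreducible, because $\hat{\chi}^{-2}$ is linear. Its central character is $\chi^{-2}\chi^{2}=\mathds{1}_{Z}$, so it is trivial on $Z$.

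By Theorem~\ref{thm: Main thm 2}, we therefore have $\hat{\chi}^{-2}\rho\subseteq\sigma^{2}$, that is, $\langle\hat{\chi}^{-2}\rho,\sigma^{2}\rangle>0$. Multiplication by a linear character is a unitary operation on class functions. In particular it preserves inner products, since $|\hat{\chi}^{2}(g)|=1$ for all $g$. This gives
\[
\langle\rho,\hat{\chi}^{2}\sigma^{2}\rangle=\langle\hat{\chi}^{-2}\rho,\sigma^{2}\rangle>0.
\]
Hence $\rho\subseteq(\hat{\chi}\sigma)^{2}$.

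There is no real obstacle. The only points to check are that $\hat{\chi}$ is linear, so that twisting preserves irreducibility and inner products, and that twisting shifts central characters multiplicatively. Both are immediate.
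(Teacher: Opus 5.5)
Your proposal is correct and follows the paper's proof: you twist $\rho$ by $\hat{\chi}^{-2}$ to get a character trivial on $Z$, apply Theorem~\ref{thm: Main thm 2}, and twist back. You also spell out the details the paper leaves implicit, namely that $\hat{\chi}$ is linear and that twisting preserves inner products.
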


\begin{proof}
Let $\rho\in\Irr(G)$ have central character $\chi^{2}$. Then $\hat{\chi}^{-2}\rho$
is trivial on $Z$, so by Theorem~\ref{thm: Main thm 2}, $\hat{\chi}^{-2}\rho\subseteq\sigma^{2}$
and hence $\rho\subseteq(\hat{\chi}\sigma)^{2}$.
\end{proof}

Note that if $\rho\in\Irr(G)$ has central character $\chi$, then
every irreducible constituent of $\rho^{2}$ has central character
$\chi^{2}$. In view of the preceding results, it is natural to ask
whether, for any square character $\zeta\in\Irr(Z)$, there exists
a $\rho\in\Irr(G)$ whose square contains all the irreducible characters
of $G$ with central character $\zeta$. Corollary~\ref{cor: central char extends then square}
gives a positive answer when a square root of $\zeta$ extends to
$G$. The following example shows that the answer can otherwise be
negative.
\begin{example}
We show that $G=\GL_{2}(5)$ does not have any irreducible character
whose square contains all the irreducible characters of $G$ with
central character $\zeta$ of order $2$. We identify the centre $Z$
of $G$ with $\F^{\times}_{q}$ and identify the corresponding characters.
Let $\chi$ be an irreducible character of $Z$ of order $4$, so
that $\Irr(Z)=\{\mathds{1},\zeta,\chi,\bar{\chi}\}$. Note that $\chi^{2}=\bar{\chi}^{2}=\zeta$
and $\zeta\chi=\bar{\chi}$.

We first find the irreducible characters of $G$ with central character
$\zeta$. Note that $\zeta$ extends to $\chi\circ\det\in\Irr(G)$,
since $\chi\circ\det|_{Z}=\bar{\chi}\circ\det|_{Z}=\chi^{2}=\zeta$.
Thus, $\rho\in\Irr(G)$ has central character $\zeta$ if and only
if $\chi\rho$ is trivial on $Z$. Thus the characters of $G$ with
central character $\zeta$ are precisely $C=\{\chi\tilde{\rho}\mid\rho\in\Irr(G/Z)\}$,
where $\tilde{\rho}$ is the inflation and we note, as is well-known,
that $G/Z\cong S_{5}$.

Next, assume that there exists a $\sigma\in\Irr(G)$ such that $\sigma^{2}$
contains all the characters $\chi\tilde{\rho}\in C$. Then, since
the degrees of the characters in $C$ are $1,1,4,4,5,5,6$, we must
have $\sigma^{2}(1)\geq26$, hence, as the degrees of irreducible
characters of $G$ are $1,4,5$ and $6$, we must have $\sigma(1)\geq6$.
Moreover, since every irreducible constituent of $\sigma^{2}$ must
have central character $\zeta$, it follows that $\sigma$ has central
character $\chi$ or $\bar{\chi}$. We thus proceed to find the irreducible
characters of $G$ of degree $6$ and central character $\chi$ or
$\bar{\chi}$.

Let $\rho\in\Irr(G)$ be of degree $6$. Then $\rho=R^{G}_{T}(\alpha,\beta)$,
for some $\alpha,\beta\in\Irr(\F^{\times}_{q})$ with $\alpha\neq\beta$,
and $\rho$ has central character $\alpha\beta$. Assuming that $\alpha\beta=\chi$,
so that $\beta=\bar{\alpha}\chi$, the possibilities for $(\alpha,\beta)$
are $(\mathds{1},\chi),(\zeta,\bar{\chi}),(\chi,\mathds{1}),(\bar{\chi},\zeta)$.
This gives two distinct irreducible characters
\[
\rho_{1}=R^{G}_{T}(\mathds{1},\chi),\qquad\rho_{2}=R^{G}_{T}(\zeta,\bar{\chi}).
\]
Similarly, the possible irreducible characters $R^{G}_{T}(\alpha,\beta)$
with central character $\bar{\chi}$ are
\[
\rho_{3}=R^{G}_{T}(\mathds{1},\bar{\chi})=\bar{\rho}_{1},\qquad\rho_{4}=R^{G}_{T}(\zeta,\chi)=\bar{\rho}_{2}.
\]
Hence $\rho\in\{\rho_{1},\rho_{2},\rho_{3},\rho_{4}\}$.

Finally, we compute inner products of the squares of the $\rho_{i}$
with linear characters to derive a contradiction. Let $\phi\in\Irr(G/Z)$
denote the nontrivial linear character. Then $\tilde{\phi}=\zeta\circ\det$,
so $\chi\tilde{\phi}=\chi\zeta\circ\det=\bar{\chi}\circ\det$. Thus
\begin{align*}
\langle\rho^{2}_{1},\chi\tilde{\phi}\rangle & =\langle\rho_{1},\bar{\chi}\bar{\rho}_{1}\rangle=\langle\rho_{1},\bar{\chi}\rho_{3}\rangle=\langle\rho_{1},R^{G}_{T}(\bar{\chi}\circ\det)(\mathds{1},\bar{\chi})\rangle\\
 & =\langle\rho_{1},R^{G}_{T}(\bar{\chi},\zeta)\rangle=\langle\rho_{1},\rho_{2}\rangle=0
\end{align*}
and
\begin{align*}
\langle\rho^{2}_{2},\chi\tilde{\phi}\rangle & =\langle\rho_{2},\bar{\chi}\bar{\rho}_{2}\rangle=\langle\rho_{2},\bar{\chi}\rho_{4}\rangle=\langle\rho_{2},R^{G}_{T}(\bar{\chi}\circ\det)(\zeta,\chi)\rangle\\
 & =\langle\rho_{2},R^{G}_{T}(\chi,\mathds{1})\rangle=\langle\rho_{2},\rho_{1}\rangle=0.
\end{align*}
Moreover, from these computations we also deduce
\begin{align*}
\langle\rho^{2}_{3},\chi\circ\det\rangle & =\langle\rho_{3},\chi\bar{\rho}_{3}\rangle=\langle\rho_{3},\chi\rho_{1}\rangle=\langle\bar{\chi}\rho_{3},\rho_{1}\rangle=0
\end{align*}
and
\begin{align*}
\langle\rho^{2}_{4},\chi\circ\det\rangle & =\langle\rho_{4},\chi\bar{\rho}_{4}\rangle=\langle\rho_{4},\chi\rho_{2}\rangle=\langle\bar{\chi}\rho_{4},\rho_{2}\rangle=0.
\end{align*}
Therefore every candidate square omits at least one of the two required
linear characters, which is a contradiction.
\end{example}

\bibliographystyle{alex}
\bibliography{alex}

\end{document}